\newtheorem{theorem}{Theorem}[section]
\newtheorem{proposition}[theorem]{Proposition}
\newtheorem{lemma}[theorem]{Lemma}
\newtheorem{corollary}[theorem]{Corollary}
\newtheorem{assumption}[theorem]{Assumption}
\theoremstyle{definition}
\newtheorem{definition}[theorem]{Definition}
\newtheorem{example}[theorem]{Example}
\renewcommand{\phi}{\varphi}
\renewcommand{\epsilon}{\varepsilon}
\newcommand{\N}{\mathbb N}
\newcommand{\C}{\mathbb C}
\renewcommand{\setminus}{\backslash}
\newcommand{\setof}[2]{\left\{ #1 \;\middle|\; #2 \right\}}
\newcommand{\ftn}[3]{ #1 \colon #2 \rightarrow #3 }
\newcommand{\reg}{\textnormal{reg}} % regular vertices
\newcommand{\sing}{\textnormal{sing}} % singular vertices
\newcommand{\starhomo}{\mbox{$\sp*$-}ho\-mo\-morphism\xspace}
\newcommand{\stariso}{\mbox{$\sp*$-}iso\-morphism\xspace}
\newcommand{\starisos}{\mbox{$\sp*$-}iso\-morphisms\xspace}
\DeclareMathOperator{\im}{im} % image
\newcommand{\iso}{\mathrm{iso}} % isolated points
\newcommand{\cycle}{\mathrm{cycle}} % cycle points
\newcommand{\sink}{\mathrm{sink}} % sinks
\numberwithin{equation}{section}
\title[Diagonal preserving $^*$-isomorphisms of graph $C^*$-algebras]{A dynamical characterization of diagonal preserving $^*$-isomorphisms of graph $C^*$-algebras}
\author{Sara E. Arklint}
\address{Department of Mathematical Sciences, University of Copenhagen, Uni\-versi\-tets\-parken~5, DK-2100 Copenhagen, Denmark}
\email{arklint@math.ku.dk}
\author{S\o{}ren Eilers}
\address{Department of Mathematical Sciences, University of Copenhagen, Uni\-versi\-tets\-parken~5, DK-2100 Copenhagen, Denmark}
\email{eilers@math.ku.dk}
\author{Efren Ruiz}
\address{Department of Mathematics, University of Hawaii, Hilo, 200 W.~Kawili St., Hilo, Hawaii, 96720-4091 USA}
\email{ruize@hawaii.edu}
\date{\today}
\keywords{Orbit equivalence, diagonal preserving $^*$-isomorphism}
\subjclass[2010]{Primary: 46L55; Secondary: 46L35, 37B10.}
\begin{document}

\begin{abstract}
We characterize when there exists a diagonal preserving \stariso between two graph $C^*$-algebras in terms of the dynamics of the boundary path spaces.  In particular, we refine the notion of ``orbit equivalence'' between the boundary path spaces of the directed graphs $E$ and $F$ and show that this is a necessary and sufficient condition for the existence of a diagonal preserving \stariso between the graph $C^*$-algebras $C^*(E)$ and $C^*(F)$.  
\end{abstract}

\maketitle

%%%%%%%%%%%%%%%%%%%%%%%%%%%%%%%%%%%%%%%%%%%%%%%%%%%%%%%%%%
%%% SECTION: Introduction %%%%%%%%%%%%%%%%%%%%%%%%%%%%%%%%
%%%%%%%%%%%%%%%%%%%%%%%%%%%%%%%%%%%%%%%%%%%%%%%%%%%%%%%%%%

\section{Introduction}

The notion of \emph{continuous orbit equivalence}, pioneered by 
Matsumoto (\cite{matsu}), has proven to be an extremely important vehicle for understanding the relationship between dynamical systems and the $C^*$-algebras that they define. Indeed, this concept was a key ingredient which allowed Matsumoto and Matui to prove that the stabilized Cuntz-Krieger algebras become complete invariants for flow equivalence of irreducible shifts of finite type when considered not just as $C^*$-algebras, but as $C^*$-algebras containing a canonical commutative subalgebra, the \emph{diagonal}. The key result in \cite{MR3276420} thus gave an extremely elegant answer to the question that has been left open since R\o rdam 
as a key step in the proof of his seminal classification result \cite{MR1340839} showed that two such shift spaces can give the same Cuntz-Krieger algebra without being flow equivalent: The diagonal is precisely the extra structure which is needed for the $C^*$-algebra to remember its underlying shift space.

The success of the approach of Matsumoto and Matui begs the question of whether or not similar results hold true for more general $C^*$-algebras such as non-simple Cuntz-Krieger algebras and (simple or non-simple) graph $C^*$-algebras, objects which are currently (see \cite{preprint:ERRS}) giving way to classification in a way parallelling R\o rdam's results. In a sweeping generalization of Matsumoto's fundamental result, Brownlowe, Carlsen and Whittaker in \cite{arXiv:1410.2308} showed that continuous orbit equivalence exactly translates to diagonal-preserving isomorphism of the graph $C^*$-algebras 
for any graph with the so-called \emph{Condition (L)}, and proved by example that this condition is necessary. 

In the paper at hand, we will study \emph{continuous orbit equivalences preserving eventually periodic points} and prove that they exactly correspond to diagonal-preserving isomorphism of graph $C^*$-algebras. This small adjustment of the notions studied in \cite{MR3276420} and \cite{arXiv:1410.2308} thus allow a complete understanding also when Condition (L) fails.  In particular, we prove that the original notion of orbit equivalence corresponds to diagonal-preserving isomorphism when the graphs are finite and have no sinks, a case  prominently containing  the classical Cuntz-Krieger case. 
%In forthcoming work by the second named author and Carlsen, Ortega and Restorff, we will show how it may then be inferred by further generalizations of \cite{MR3276420} that in fact any diagonal-preserving stable isomorphism between Cuntz-Krieger algebras come from flow equivalence of the underlying shift spaces in complete generality.

Our method of proof involves  reducing the general case to the Condition (L) case and hence to the  main result of \cite{arXiv:1410.2308} by an elaboration the concept of ``plugging'' and ``unplugging'' graphs introduced in \cite{preprint:ERRS}.

After having circulated an early version of this paper, we were made aware that our main result had been simultaneously obtained by Carlsen and Winger (\cite{cw}) by completely different methods.

%%%%%%%%%%%%%%%%%%%%%%%%%%%%%%%%%%%%%%%%%%%%%%%%%%%%%%%%%%
%%% SECTION: Preliminaries %%%%%%%%%%%%%%%%%%%%%%%%%%%%%%%%
%%%%%%%%%%%%%%%%%%%%%%%%%%%%%%%%%%%%%%%%%%%%%%%%%%%%%%%%%%

\section{Preliminaries}

In this section, we provide the definitions of the objects considered in this paper.  We start with some background on directed graphs, graph $C^*$-algebras and their diagonal subalgebras.  The definitions of the boundary path space and the graph groupoid of a directed graph are also provided.

%%%%%%%%%%%%%%%%%%%%%%%%%%%%%%%%%%%%%%%%%%%%%%%%%%%%%%%%%%
%%% SUBSECTION: graph C*-algebras %%%%%%%%%%%%%%%%%%%%%%%%%%%%%%%%
%%%%%%%%%%%%%%%%%%%%%%%%%%%%%%%%%%%%%%%%%%%%%%%%%%%%%%%%%%

\subsection{Graph $C^*$-algebras and the diagonal subalgebra}

A \emph{directed graph} $E = (E^0 , E^1, r, s )$ consists of sets $E^0$ and $E^1$ and functions, $\ftn{r,s}{E^1}{E^0}$ called the range and source maps, respectively.  The elements of $E^0$ are called the \emph{vertices} of $E$ and the elements of $E^1$ are called the \emph{edges} of $E$.

\begin{assumption}
Throughout the paper, unless stated otherwise, when we say a graph we mean a directed graph.  Moreover, we will only consider graphs such that the set of vertices and the set of edges are countable sets.
\end{assumption}

Let $E$ be a graph.  A \emph{path of length $n$} in $E$ is a finite sequence $\mu = e_1 e_2 \dots e_n$ with $e_i \in E^1$ and $r( e_i ) = s( e_{i+1} )$ for all $i = 1, \dots, n-1$.  We will regard the vertices $E^0$ of $E$ as paths of length zero.  Denote the set of paths of length $n$ in $E$ by $E^n$.  Set $E^* = \bigcup_{ n \in \N_0 } E^n$ and set $E^{\geq k } = \bigcup_{ n \geq k } E^n$.  We extend the range and source maps to $E^*$ by $r(v) = s(v) = v$ for $v \in E^0$, and $s( e_1 \cdots e_n ) = s( e_1 )$ and $r( e_1 \cdots e_n ) = r( e_n )$.   

A \emph{loop} in $E$ is an edge $e$ in $E$ such that $s(e) = r(e)$.  A \emph{cycle} in $E$ is a path $\mu \in E^{\geq 1}$ such that $s( \mu ) = r( \mu )$.  A cycle $e_1 e_2 \cdots e_n$ in $E$ is said to have an \emph{exit} if there exists an $f \in E^1$ such that $s(e_k) = s(f)$ for some $k = 1, 2, \dots, n$ with $f \neq e_k$.  A \emph{vertex-simple cycle} in $E$  is a cycle $\mu = e_1 e_2 \cdots e_n$ such that $r( e_i ) \neq r( e_j )$ for all $i \neq j$.  A \emph{return path} in $E$ is a cycle $\mu = e_1 e_2 \cdots e_n$ such that $r( e_i ) \neq r( \mu )$ for all $i = 1, 2, \dots, n - 1$.  

An \emph{infinite path} in $E$ is an infinite sequence $( e_n )_{ n = 1 }^\infty$, denoted $e_1 e_2 \cdots$, such that $e_i \in E^1$ and $r(e_i) = s(e_{i+1})$ for all $i$.  The set of infinite paths in $E$ will be denoted by $E^\infty$.  If $e$ is a loop and $n \in \N$, then $e^n$ will denote the cycle of length $n$ with edges equal to $e$ and $e^\infty$ will denote the infinite path with edges equal to $e$.
If $\mu$ is a cycle in $E$, then $\mu^\infty$ denotes the infinite path $\mu\mu\mu\cdots$.

\begin{definition}
A graph $E$ is said to have \emph{Condition~(L)} if every cycle in $E$ has an exit.  
\end{definition}

Let $V, W$ be subsets of $E^0$, $S$ a subset of $E^* \cup E^\infty$, and $n \in \N_0$.  Define subsets of $E^* \cup E^\infty$, labelled $VS$, $SW$, and $VSW$, by $V S = \setof{ \mu \in S }{ s(\mu) \in V}$, $S W = \setof{ \mu \in S }{ r(\mu) \in W }$, and $V S W = V S \cap S W$.  Note that $SW$ and $VSW$ are subsets of $E^*$ since the range map is only defined on $E^*$.  We will write $v S$ if $V = \{ v \}$.  Similarly for $SW$ and $VSW$.  A vertex $v \in E^0$ is called \emph{regular} if $0 < | v E^1 | < \infty$.  Denote the set of regular vertices in $E$ by $E_\reg^0$.  A \emph{singular vertex} is a vertex $v$ in $E$ that is not regular.  We denote the set of singular vertices by $E_\sing^0$.  A  vertex $v$ in $E$ is called a \emph{sink} if $| v E^1 | = 0$ and is called an \emph{infinite emitter} if $| v E^1 | = \infty$.  Denote the set of sinks by $E^0_\sink$ and the set of infinite emitters by $E^0_\mathrm{inf}$.  Hence, $E^0_\mathrm{sing} = E^0_\sink \cup E^0_\mathrm{inf}$.

We call an infinite path $e_1e_2\cdots$ in $E$ a \emph{tail} if $s(e_i)E^1=\{e_i\}=E^1s(e_{i+1})$, and \emph{non-wandering} if $s(e_i)E^1=\{e_i\}$.
If $\mu\in E^*$ is a cycle with no exits, then $\mu^\infty$ is non-wandering.

\begin{definition}\label{d:graphalgebras}
Let $E$ be a graph.  A \emph{Cuntz-Krieger $E$-family} in a $C^*$-algebra $A$ consists of a set of mutually orthogonal projections $\setof{ P_v }{ v \in E^0 } \subseteq A$ and a set of partial isometries $\setof{ S_e }{ e \in E^1 } \subseteq A$ satisfying 
\begin{enumerate}[label=\textbf{(CK\arabic*)}]
\item \label{itm:ck1} $S_e^* S_f = 0$ for all $e, f \in E^1$ with $e \neq f$;

\item \label{itm:ck2} $S_e^* S_e = P_{r(e)}$ for all $e \in E^1$;

\item \label{itm:ck3} $S_e S_e^* \leq P_{s(e)}$ for all $e \in E^1$; and 

\item \label{itm:ck4} $P_v = \sum_{ e \in v E^1 } S_e S_e^*$ for all $v \in E^0_\reg$.
\end{enumerate}
The \emph{graph $C^*$-algebra $C^*(E)$} is the universal $C^*$-algebra generated by a Cuntz-Krieger $E$-family.  

If $\mu= e_1 e_2 \cdots e_n \in E^{\geq 2}$, we set $s_\mu = s_{e_1} s_{e_2} \cdots s_{e_n}$ and for $v \in E^0$, we set $s_v = p_v$.  Then the $C^*$-subalgebra $\overline{\mathrm{span}} \setof{ s_\mu s_\mu^* }{ \mu \in E^* }$ of $C^*(E)$ is called the \emph{diagonal subalgebra} of $C^*(E)$ and is denoted by $\mathcal{D}(E)$.  
\end{definition}

\begin{definition}
Let $E$ and $F$ be graphs.  A \stariso $\ftn{\Phi}{C^*(E)}{C^*(F)}$ is a \emph{diagonal preserving \stariso} if $\Phi ( \mathcal{D} (E) ) = \mathcal{D} (F)$.
\end{definition}

%%%%%%%%%%%%%%%%%%%%%%%%%%%%%%%%%%%%%%%%%%%%%%%%%%%%%%%%%%
%%% SUBSECTION: boundary path %%%%%%%%%%%%%%%%%%%%%%%%%%%%%%%%
%%%%%%%%%%%%%%%%%%%%%%%%%%%%%%%%%%%%%%%%%%%%%%%%%%%%%%%%%%

\subsection{Boundary path space and the graph groupoid of a graph}  The definitions given in this section follows that of \cite[Section~2.2 and 2.3]{arXiv:1410.2308}.
 
\begin{definition}\label{d:boundary}
Let $E$ be a graph.  The \emph{boundary path space} of $E$ is the space
\[
\partial E = E^\infty \sqcup \setof{ \mu \in E^* }{ r(\mu ) \in E_{\mathrm{sing}}^0 }.
\]
The topology on $\partial E$ is given as follows:  For $\mu \in E^*$, the \emph{cylinder set} of $\mu$ is the set 
\[
\mathcal{Z}_E ( \mu ) = \setof{ \mu x \in \partial E }{ x \in r(\mu)\partial E },
\]
where $\mu x$ is the concatenation of paths.  For $\mu \in E^*$ and a finite subset $F$ of $r( \mu ) E^1$, set 
\[
\mathcal{Z}_E ( \mu \setminus F ) = \mathcal{Z}_E (\mu ) \setminus \left( \bigcup_{ e \in F } \mathcal{Z}_E ( \mu e ) \right).
\]
(When there is no cause for confusion, we will at times omit the subscripts.)  The topology of $\partial E$ is the topology generated by 
\[
\mathcal{B}_E = \setof{ \mathcal{Z}_E ( \mu \setminus F ) }{ \text{$\mu \in E^*$, $F$ a finite subset of $r(\mu)E^1$} }. 
\]
\end{definition} 

The boundary path space $\partial E$ is a locally compact Hausdorff space with basis $\mathcal{B}_E$ and every $U \in \mathcal{B}_E$ is compact and open (see \cite[Theorem~2.1 and Theorem~2.2]{MR3119197}).  

The key relationship between $\partial E$ and $\mathcal{D}(E)$ is the following theorem.

\begin{theorem}[{\cite[Theorem~3.7]{MR3119197}}]\label{thm:prim-diagonal}
There exists a unique homeomorphism $h_E$ from $\partial E$ to the spectrum of $\mathcal{D} (E)$ given by 
\[
h_E (x) ( s_\mu s_\mu^* ) = 
\begin{cases}
1 &\text{if $x \in \mathcal{Z}(\mu)$} \\
0 &\text{otherwise}.
\end{cases}
\]
\end{theorem}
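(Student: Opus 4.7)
The plan is to exhibit a $*$-isomorphism $\Phi \colon \mathcal{D}(E) \to C_0(\partial E)$ that sends $s_\mu s_\mu^*$ to the characteristic function $\chi_{\mathcal{Z}(\mu)}$ of the cylinder set (which is compact and open in $\partial E$, hence indeed in $C_0(\partial E)$). Once $\Phi$ is in hand, Gelfand duality yields a canonical homeomorphism from the spectrum of $\mathcal{D}(E)$ to the spectrum of $C_0(\partial E)$, and the latter is $\partial E$; composing gives $h_E$, and its value on each generating projection $s_\mu s_\mu^*$ is $\chi_{\mathcal{Z}(\mu)}$ evaluated at the relevant point, which is exactly the piecewise formula asserted in the theorem. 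Uniqueness of $h_E$ is then immediate since it is pinned down on a set of elements generating $\mathcal{D}(E)$ as a $C^*$-algebra.

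The algebraic groundwork is straightforward. Using (CK1)--(CK3) one derives the standard pairing identity for $s_\mu^* s_\nu$ in terms of the prefix relation on $E^*$, and from this obtains
\[
s_\mu s_\mu^* \cdot s_\nu s_\nu^* \;=\; \begin{cases} s_\tau s_\tau^* & \text{if one of $\mu, \nu$ is a prefix of the other, $\tau$ the longer,} \\ 0 & \text{otherwise.}\end{cases}
\]
In particular $\mathcal{D}(E)$ is commutative, with the linear span $A_0$ of $\{s_\mu s_\mu^*\}$ a dense $*$-subalgebra. The characteristic functions $\chi_{\mathcal{Z}(\mu)}$ satisfy exactly the same product law in $C_0(\partial E)$, so the assignment $s_\mu s_\mu^* \mapsto \chi_{\mathcal{Z}(\mu)}$ extends linearly to a $*$-homomorphism $\Phi_0 \colon A_0 \to C_0(\partial E)$. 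Its image is a $*$-subalgebra containing every indicator $\chi_{\mathcal{Z}(\mu \setminus F)} = \chi_{\mathcal{Z}(\mu)} - \sum_{e \in F}\chi_{\mathcal{Z}(\mu e)}$ of the basic sets in $\mathcal{B}_E$; this image separates points of $\partial E$ and vanishes nowhere, so Stone--Weierstrass makes it dense in $C_0(\partial E)$.

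The main obstacle is the injectivity of $\Phi_0$, because relation (CK4) imposes non-trivial dependencies among the $s_\mu s_\mu^*$ at regular vertices, namely $s_\mu s_\mu^* = \sum_{e \in r(\mu) E^1} s_{\mu e} s_{\mu e}^*$ whenever $r(\mu) \in E^0_{\reg}$. To handle this, I would put any finite combination $\sum_i \alpha_i s_{\mu_i} s_{\mu_i}^*$ into a canonical form by choosing a finite set of pairwise incomparable paths $\nu$ extending all the $\mu_i$, together with finite sets $G_\nu \subseteq r(\nu) E^1$, and rewriting it as $\sum_\nu \beta_\nu \bigl(s_\nu s_\nu^* - \sum_{e \in G_\nu} s_{\nu e} s_{\nu e}^*\bigr)$; repeated application of (CK4) at regular vertices lets us do this so that the basic sets $\mathcal{Z}(\nu \setminus G_\nu)$ are pairwise disjoint and non-empty. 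The summand projections are then mutually orthogonal, hence linearly independent, while the indicator functions $\chi_{\mathcal{Z}(\nu \setminus G_\nu)}$ are likewise linearly independent; thus vanishing of either side forces every $\beta_\nu$ to be zero. Consequently $\Phi_0$ is injective, extends to an isometric $*$-homomorphism $\Phi$ on $\mathcal{D}(E)$, and combined with the density proved above is the desired $*$-isomorphism $\mathcal{D}(E) \cong C_0(\partial E)$, completing the argument.
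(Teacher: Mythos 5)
The paper does not actually prove this statement: it is imported wholesale from \cite[Theorem~3.7]{MR3119197}, so there is no internal argument to compare against. Your strategy --- construct a \stariso $\mathcal{D}(E)\cong C_0(\partial E)$ carrying $s_\mu s_\mu^*$ to $\chi_{\mathcal{Z}(\mu)}$ and then invoke Gelfand duality --- is the standard route, and most of your outline is sound: the product law for the range projections, the Stone--Weierstrass density of the image, and the rewriting of a finite linear combination as $\sum_\nu \beta_\nu q_\nu$ with $q_\nu = s_\nu s_\nu^* - \sum_{e\in G_\nu}s_{\nu e}s_{\nu e}^*$ and the sets $\mathcal{Z}(\nu\setminus G_\nu)$ pairwise disjoint are all correct in outline.

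There is, however, one genuine gap, and it sits at the crux of the theorem. You conclude that the summand projections are ``mutually orthogonal, hence linearly independent,'' but orthogonal projections are linearly independent only if they are \emph{nonzero}, and nothing in your argument shows that $q_\nu\neq 0$ in the universal $C^*$-algebra whenever $\mathcal{Z}(\nu\setminus G_\nu)\neq\emptyset$. (The easy direction is the converse: if $\mathcal{Z}(\nu\setminus G_\nu)=\emptyset$ then $r(\nu)$ is regular and $G_\nu=r(\nu)E^1$, so $q_\nu=0$ by \ref{itm:ck4}.) Without this, both the well-definedness and the injectivity of $\Phi_0$ fail: a priori the universal algebra could satisfy further relations collapsing some $q_\nu$ to zero even though the corresponding set is nonempty, and then the assignment on the span would not be well defined. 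The standard repair is to exhibit a concrete Cuntz--Krieger $E$-family on $\ell^2(\partial E)$, namely $P_v\delta_x=\delta_x$ for $x\in v\partial E$ and $P_v\delta_x=0$ otherwise, and $S_e\delta_x=\delta_{ex}$ when $r(e)=s(x)$ and $S_e\delta_x=0$ otherwise; relation \ref{itm:ck4} holds because a regular vertex is not itself a boundary path, so every $x\in v\partial E$ begins with an edge. The universal surjection then carries $q_\nu$ to the projection onto $\overline{\mathrm{span}}\setof{\delta_x}{x\in\mathcal{Z}(\nu\setminus G_\nu)}$, which is nonzero exactly when the set is nonempty. With that lemma supplied, your canonical-form argument closes, $\Phi_0$ is isometric on the span, and the remainder of your proof (extension, Gelfand duality, uniqueness from the values on generators) goes through.
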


\begin{lemma}\label{l:sinks-closed}
Let $E$ be a graph and let $S$ be a subset of $E^0_\sink$.  Then $S$ is a clopen (i.e., closed and open) subset of $\partial E$.
\end{lemma}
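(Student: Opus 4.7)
The plan is to prove openness and closedness separately, exploiting the fact that a sink $v$, viewed as a length-$0$ path, has an especially simple cylinder.

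For openness, I would first observe that each singleton $\{v\}$ with $v \in E^0_{\sink}$ is itself a basic open set. Indeed, since $vE^1 = \emptyset$, the only element of $\partial E$ starting at $v$ is the length-$0$ path $v$ (which lies in $\partial E$ because $r(v) = v \in E^0_{\sing}$), so $v\partial E = \{v\}$ and hence
\[
\mathcal{Z}_E(v) = \{v\cdot x : x \in v\partial E\} = \{v\} \in \mathcal{B}_E.
\]
Writing $S = \bigcup_{v \in S} \{v\}$ then immediately shows $S$ is open.

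For closedness, I would show that $\partial E \setminus S$ is open by producing, for each $y \in \partial E \setminus S$, a basic open neighborhood of $y$ that is disjoint from $S$. I would split into cases based on the length of $y$. If $y$ has length at least one (in particular if $y \in E^\infty$), then taking $\mu$ to be the first edge of $y$, the cylinder $\mathcal{Z}_E(\mu)$ contains $y$ and consists entirely of paths of length at least one; since $S \subseteq E^0$ consists of length-$0$ paths, we get $\mathcal{Z}_E(\mu) \cap S = \emptyset$. If instead $y = v \in E^0_{\sing} \setminus S$, then $\mathcal{Z}_E(v)$ is a basic open neighborhood of $v$, and any length-$0$ element of $\mathcal{Z}_E(v)$ must equal $v$; since $v \notin S$, we again obtain $\mathcal{Z}_E(v) \cap S = \emptyset$.

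There is no real obstacle here: the proof is an unpacking of the definition of the topology on $\partial E$, and the only point that requires a moment of care is recognizing in the openness step that the cylinder at a sink collapses to a singleton, so that one does not need to invoke any separation argument within $E^0_{\sink}$.
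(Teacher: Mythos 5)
Your proof is correct and follows essentially the same route as the paper: openness via the observation that $\mathcal{Z}_E(v)=\{v\}$ for a sink $v$, and closedness by exhibiting, for each point of the complement, a cylinder neighborhood missing $S$ (the paper packages your two cases into the single remark that the only cylinder set containing a sink $v$ is $\{v\}$). No gaps.
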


\begin{proof}
First note that for all $v \in E^0_\sink$ and for all $\mu \in E^*$ and $F$ finite subset of $r(\mu ) E^1$, $v \in \mathcal{Z}_E ( \mu \setminus F )$ if and only if $v = \mu$ and $F = \emptyset$.  In particular, for all $v \in E^0_\sink$, $\mathcal{Z}_E ( v ) = \{ v \}$.  Therefore, $S = \bigcup_{ v \in S } \mathcal{Z}_E (v)$, and hence open.  Using again that the only cylinder set that contains a sink $v$ is $\{v\}$, we conclude that if $x \in \partial E \setminus S$, 
%and $\mu \in E^*$ and a finite subset $F$ of $r(\mu) E^1$ such that $x \in \mathcal{Z}_E ( \mu \setminus F )$, then $\mathcal{Z}_E ( \mu \setminus F ) \subseteq \partial E \setminus S$.  
then any cylinder set $\mathcal Z_E(\mu)$ containing $x$ will satisfy $\mathcal Z_E(\mu)\subseteq\partial E\setminus S$.
So, $\partial E \setminus S$ is open which implies that $S$ is closed.
\end{proof}

If $x \in \partial E$, then set
\[
|x| = 
\begin{cases}
\infty &\text{if $x \in E^\infty$} \\
n &\text{if $x \in E^n$ for some $n \in \N_0$}.
\end{cases}
\]
For $n \in \N_0$, set $\partial E^{\geq n} = \setof{ x \in \partial E }{ |x| \geq n }$.  Note that $\partial E^{\geq n}$ is an open subset of $\partial E$ since $\partial E^{\geq n}$ is equal to $\bigcup_{ \mu \in E^n } \mathcal{Z} ( \mu )$.

\begin{definition}\label{d:shiftmap}
Let $E$ be a graph.  Define the \emph{shift map} $\ftn{\sigma_E}{ \partial E^{\geq 1} }{ \partial E }$ on $E$ by 
\[
\sigma_E ( x ) = 
\begin{cases}
e_2 e_3 \cdots &\text{if $x = e_1 e_2 \cdots \in \partial E^{\geq 2}$} \\
r(x) &\text{if $x \in \partial E \cap E^1$}.
\end{cases}
\]
For $n \geq 1$, $\ftn{\sigma_E^n}{ \partial E^{\geq n } }{ \partial E}$ will be the $n$-fold composition of $\sigma_E$ with itself and $\ftn{\sigma_E^0}{\partial E}{\partial E}$ will be the identity map.   
\end{definition}

One can check that for all $n \in \N_0$,  $\sigma_E^n$ is continuous and moreover, $\sigma_E^n$ is a local homeomorphism.

We now define the graph groupoid of a graph $E$.  

\begin{definition}\label{d:graphgroupoid}
Let $E$ be a graph.  The \emph{graph groupoid} $\mathcal{G}_E$ is defined as follows:  As a set, 
\[
\mathcal{G}_E = \setof{ ( x, m - n , y ) }{ \text{$x, y \in \partial E$ with $| x | \geq m$, $|y| \geq n$, and $\sigma^m_E (x) = \sigma^n_E(y)$} }. 
\]
The product is defined by $( x, k , y ) ( w ,l , z ) = ( x, k + l , z )$ if $y = w$ and undefined otherwise, and the inverse of $(x,k, y)$ is $( y, -k, x )$.  The set of units $\mathcal{G}_E^{(0)}$ of $\mathcal{G}_E$ is $\setof{ (x, 0 , x ) }{ x \in \partial E }$.    

Let $m, n \in \N_0$, $U$ be an open subset of $\partial E^{ \geq m }$ such that $\sigma^m_E \vert_U$ is injective, and $V$ be an open subset of $\partial E^{\geq n}$ such that $\sigma^n_E \vert_V$ is injective.  Suppose $\sigma_E^m (U) = \sigma_E^n (V)$.  Set
\[
\mathcal{Z}_E (U, m, n, V ) = \setof{ (x, m-n, y) \in \mathcal{G}_E }{ x \in U, y\in V, \sigma^{m}_E (x) = \sigma_E^n(y) }. 
\]
(When there is no cause for confusion, we will at times omit the subscripts.)  Then $\mathcal{G}_E$ is a locally compact, Hausdorff, \'etale topological groupoid with the topology generated by the basis consisting of sets $\mathcal{Z}_E (U, m, n, V )$.
\end{definition}

One checks that the map $\mu \in \partial E \mapsto (\mu , 0 , \mu ) \in \mathcal{G}_E^{(0)}$ is a homeomorphism from $\partial E$ to $\mathcal{G}_E^{(0)}$.  We will freely identify $\mathcal{G}_E^{(0)}$ with $\partial E$ using this map throughout the paper without further mention.  Thus, we have range and source maps $\ftn{r,s}{ \mathcal{G}_E }{ \partial E}$ defined by $r( (x, k, y ) ) = x$ and $s( (x, k, y ) ) = y$.

By \cite[Proposition~3.3.5 and 6.1.8]{MR1799683}, the reduced and universal $C^*$-algebra of $\mathcal{G}_E$ are equal since $\mathcal{G}_E$ is topologically amenable (see \cite[Proposition~6.2]{MR2301938}).  We denote this $C^*$-algebra by $C^*( \mathcal{G}_E)$.

\begin{theorem}[{\cite[Proposition~2.2]{arXiv:1410.2308}}]
If $E$ is a graph, then there exists a unique  \stariso $\ftn{ \Phi_E }{ C^*(E) }{ C^*( \mathcal{G}_E )}$ such that $\Phi_E ( p_v ) = 1_{\mathcal{Z}(v,v)}$ for all $v \in E^0$ and $\Phi_E ( s_e ) = 1_{ \mathcal{Z} ( e, r(e) ) }$ for all $e \in E^1$, and such that $\Phi_E ( \mathcal{D} (E) ) = C_0 ( \mathcal{G}_E^{(0)} )$. 
\end{theorem}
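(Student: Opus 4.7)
The plan is to build $\Phi_E$ from the universal property of $C^*(E)$ and then upgrade it to a \stariso via the gauge-invariant uniqueness theorem for graph $C^*$-algebras. I read $\mathcal{Z}(v,v)$ as the bisection $\mathcal{Z}_E(\mathcal{Z}(v),0,0,\mathcal{Z}(v)) = \{(x,0,x) : x \in \mathcal{Z}(v)\}$ and $\mathcal{Z}(e,r(e))$ as $\mathcal{Z}_E(\mathcal{Z}(e),1,0,\mathcal{Z}(r(e))) = \{(ex,1,x) : x \in \mathcal{Z}(r(e))\}$; both are compact open bisections of the \'etale groupoid $\mathcal{G}_E$, so their indicator functions lie in $C_c(\mathcal{G}_E) \subseteq C^*(\mathcal{G}_E)$.

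First I would verify that $P_v := 1_{\mathcal{Z}(v,v)}$ and $S_e := 1_{\mathcal{Z}(e,r(e))}$ form a Cuntz--Krieger $E$-family in $C^*(\mathcal{G}_E)$. Using the identities $1_U \ast 1_V = 1_{UV}$ and $1_U^\ast = 1_{U^{-1}}$ valid for compact open bisections, each of \ref{itm:ck1}--\ref{itm:ck4} reduces to a set-theoretic identity among bisections: pairwise orthogonality of the $P_v$ and \ref{itm:ck1}--\ref{itm:ck3} are immediate, while \ref{itm:ck4} follows from the disjoint decomposition $\mathcal{Z}(v) = \bigsqcup_{e \in vE^1} \mathcal{Z}(e)$, valid for every $v \in E^0_\reg$ (using that a regular vertex is not itself a point of $\partial E$). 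The universal property of $C^*(E)$ then produces a \shom\ $\Phi_E : C^*(E) \to C^*(\mathcal{G}_E)$ with the prescribed values on generators, and uniqueness of $\Phi_E$ under these conditions is automatic since the $p_v, s_e$ generate $C^*(E)$. An induction on $|\mu|$ yields $\Phi_E(s_\mu s_\mu^\ast) = 1_{\mathcal{Z}_E(\mu)}$ sitting inside $C_0(\mathcal{G}_E^{(0)}) \cong C_0(\partial E)$, so $\Phi_E$ maps $\mathcal{D}(E)$ into $C_0(\mathcal{G}_E^{(0)})$; density of the span of these indicators in $C_0(\partial E)$---via Theorem~\ref{thm:prim-diagonal} and the basis $\mathcal{B}_E$---upgrades this inclusion to the required equality $\Phi_E(\mathcal{D}(E)) = C_0(\mathcal{G}_E^{(0)})$.

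For injectivity I would invoke the gauge-invariant uniqueness theorem for graph $C^*$-algebras. The continuous cocycle $c(x,k,y) = k$ on $\mathcal{G}_E$ induces a $\mathbb{T}$-action $\beta_z(f)(x,k,y) = z^k f(x,k,y)$ on $C^*(\mathcal{G}_E)$, which $\Phi_E$ intertwines with the canonical gauge action on $C^*(E)$; since each $P_v$ is nonzero in $C^*(\mathcal{G}_E)$ (its underlying bisection is nonempty), gauge-invariance forces $\Phi_E$ to be injective. Surjectivity comes from identifying $\Phi_E(s_\mu s_\nu^\ast)$ with the indicator of the basic bisection $\mathcal{Z}_E(\mathcal{Z}(\mu),|\mu|,|\nu|,\mathcal{Z}(\nu))$ and using that the linear span of indicators of basic compact open bisections is dense in $C^*(\mathcal{G}_E)$.

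The main obstacle I expect is the surjectivity step at singular vertices: the topology on $\mathcal{G}_E$ also uses ``punctured'' bisections built from cylinders of the form $\mathcal{Z}(\mu \setminus F)$, whose indicators must be shown to lie in the image. The natural remedy is to realise each such indicator as a finite difference $1_{\mathcal{Z}(\mu)} - \sum_{e \in F} 1_{\mathcal{Z}(\mu e)}$ (and analogously at the level of bisections), but keeping the bookkeeping straight at infinite emitters---where the $\sum_{e \in vE^1} S_e S_e^\ast$ no longer converges to $P_v$---is the most delicate part, and is what ultimately justifies both $\Phi_E(\mathcal{D}(E)) = C_0(\mathcal{G}_E^{(0)})$ and the density of the image of $\Phi_E$.
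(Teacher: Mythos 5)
Your argument is correct and matches the source: the paper gives no proof of this statement, citing it verbatim from Brownlowe--Carlsen--Whittaker \cite[Proposition~2.2]{arXiv:1410.2308}, whose proof proceeds exactly as you propose --- check the Cuntz--Krieger relations for the indicators of the compact open bisections $\mathcal Z(v,v)$ and $\mathcal Z(e,r(e))$, invoke the universal property, obtain injectivity from the gauge-invariant uniqueness theorem via the cocycle $c(x,k,y)=k$, and obtain surjectivity and the diagonal identity from density of $\Span\setof{1_A}{A \text{ a basic compact open bisection}}$ in $C^*(\mathcal G_E)$. Your flagging of the punctured cylinders $\mathcal Z_E(\mu\setminus F)$ at singular vertices as the one delicate bookkeeping point, resolved by the finite difference $1_{\mathcal Z(\mu)}-\sum_{e\in F}1_{\mathcal Z(\mu e)}$, is also accurate.
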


%%%%%%%%%%%%%%%%%%%%%%%%%%%%%%%%%%%%%%%%%%%%%%%%%%%%%%%%%%
%%% SECTION: Orbit equivalence and pseudogroups %%%%%%%%%%%%%%%%%%%%%%%%%%%%%%%%
%%%%%%%%%%%%%%%%%%%%%%%%%%%%%%%%%%%%%%%%%%%%%%%%%%%%%%%%%%

\section{Orbit equivalence preserving periodic points and pseudogroups}

We now define orbit equivalence between graphs $E$ and $F$ preserving periodic points.  When $E$ and $F$ are graphs with finitely many vertices and no sinks, or when $E$ and $F$ are graphs satisfying Condition~(L), then this notion of orbit equivalence coincides with the notion of orbit equivalence defined in \cite[Definiton~3.1]{arXiv:1410.2308}.   

%%%%%%%%%%%%%%%%%%%%%%%%%%%%%%%%%%%%%%%%%%%%%%%%%%%%%%%%%%
%%% SUBSECTION: orbit equivalence %%%%%%%%%%%%%%%%%%%%%%%%%%%%%%%%
%%%%%%%%%%%%%%%%%%%%%%%%%%%%%%%%%%%%%%%%%%%%%%%%%%%%%%%%%%

\subsection{Orbit equivalence preserving periodic points}  Let $E$ be a graph, and let $\partial E_\iso$ denote the set of isolated points in $\partial E$.  
%One checks that for $x \in \partial E$, if $r(x) \in E^0_\sink$ or $x = \mu \nu\nu \nu \cdots$ where $\nu$ is a vertex-simple cycle with no exits, then $x \in \partial E_\iso$.  

\begin{definition}
Let $E$ be a graph.  Then $x \in \partial E$ is \emph{eventually periodic} if $x =\mu \nu^\infty$ where $\mu \in E^*$, $\nu$ is a cycle in $E$, and $r(\mu ) = s( \nu )$.  
\end{definition}

Note that an eventually periodic point $x\in\partial E$ is an isolated point if and only if $x=\mu\nu^\infty$ where $\mu\in E^*$ and $\nu\in E^*$ is a cycle with no exits satisfying $r(\mu)=s(\nu)$.

We call $x\in\partial E$ \emph{eventually a sink} if $x\in E^*$ with $r(x)\in E^0_\sink$, and call $x$ \emph{eventually non-wandering} if $x=\mu y$ where $\mu\in E^*$ and $y\in r(\mu)E^\infty$ is non-wandering.
The isolated points in $\partial E$ are exactly the points that are eventually sinks or eventually non-wandering.
Clearly, the eventually periodic isolated points are exactly the eventually periodic eventually non-wandering points.
We will refer to the eventually non-wandering points that are are not eventually periodic as \emph{eventually non-periodic non-wandering}.

\begin{definition}\label{d:orbitequivalence}
Let $E$ and $F$ be graphs and let $\ftn{ \kappa }{ \partial E }{ \partial F }$ be a homeomorphism.  We say that $\kappa$ is an \emph{orbit equivalence} if there exist continuous functions $\ftn{ l,m }{\partial E^{ \geq 1 } }{ \N_0 }$ and $\ftn{ l',m' }{\partial F^{\geq 1 }}{\N_0}$ such that 
\begin{enumerate}
\item \label{def:oe1} $\sigma_F^{m(x)} ( \kappa( \sigma_E( x) ) ) = \sigma_F^{l(x)} ( \kappa(x) )$ for all $x \in \partial E^{\geq 1}$; and

\item \label{def:oe2} $\sigma_E^{m'(y)} ( \kappa^{-1}( \sigma_F( y) ) ) = \sigma_E^{l'(y)} ( \kappa^{-1}(y) )$ for all $y \in \partial F^{\geq 1}$.
\end{enumerate}
If such a $\kappa$ exists, we say that $E$ and $F$ are \emph{orbit equivalent} or there exists an \emph{orbit equivalence between $E$ and $F$}.

If, in addition, $\kappa$ satisfies
\begin{enumerate}[(3)]
\item \label{def:oe3} for all $x \in \partial E_{\iso}$, $x$ is eventually periodic if and only if $\kappa(x)$ is eventually periodic,
\end{enumerate}
then we say that $\kappa$ is an \emph{orbit equivalence preserving periodic points}.  If such a $\kappa$ exists, then we say that there exists an \emph{orbit equivalence between $E$ and $F$ preserving periodic points}.
\end{definition}

All eventually non-wandering points in $\partial E$ are eventually periodic if the graph $E$ has finitely many vertices.
Hence if $E$ and $F$ are graphs with finitely many vertices and no sinks, an orbit equivalence between $E$ and $F$ will automatically preserve periodic points.
Likewise, if $E$ and $F$ are graphs satisfying Condition (L), any orbit equivalence between $E$ and $F$ will preserve periodic points, as $E$ and $F$ contain no eventually periodic isolated points.

In general, if $E$ and $F$ are orbit equivalent graphs, there may not exist an orbit equivalence between $E$ and $F$ preserving periodic points, as all three types of isolated points may be interchanged by an orbit equivalence.
%Note that if there exists an orbit equivalence between $E$ and $F$ preserving periodic points then $E$ and $F$ are orbit equivalent.  The converse is not true, as all three types of isolated points may be interchanged.
Consider the graphs
%\medskip
\[
 \xymatrix{ E \colon &  \bullet } \quad \quad  \text{and} \quad \quad \xymatrix{ F \colon & \bullet \ar@(ul,ur)[]^-{} }.
\]
By \cite[Example~5.2]{arXiv:1410.2308}, $E$ and $F$ are orbit equivalent but there is no orbit equivalence between $E$ and $F$ preserving periodic points,
as the isolated point in $\partial E$ is a sink while the isolated point in $\partial F$ is periodic.
In Example~\ref{ex:interchanging_isolated} we provide examples of orbit equivalences that interchange eventually periodic points with eventually non-periodic non-wandering points, and eventually sinks with eventually non-periodic non-wondering points.

\begin{example} \label{ex:interchanging_isolated}
Consider the graphs $E$, $F$ and $G$:
\[
 \xymatrix{ 
 & \vdots\ar[d]^-{e_2} \\
 E \colon & \bullet\ar[d]^-{e_1} \\
 &  \bullet \ar@(dr,dl)[]^-{f} 
 }  \quad  \quad \quad \xymatrix{ 
 & \bullet\ar[d]^-{e_1} \\
 F \colon & \bullet\ar[d]^-{e_2} \\
 & \vdots
}  \quad   \quad \quad \xymatrix{ 
& \vdots\ar[d]^-{e_2} \\
G \colon  & \bullet \ar[d]^-{e_1} \\
& v %\ar[d]^-{f_1} \\
%& \bullet \ar[d]^-{f_2} \\
%& \vdots
 }
\]
Then all points in $\partial E$ are eventually periodic isolated points, all points in $\partial F$ are eventually non-periodic non-wandering, and all points in $\partial G$ are eventually sinks.
We now show that $E$, $F$, and $G$ are orbit equivalent.

Define $\kappa_1\colon\partial E\to\partial F$ and $\kappa_2\colon\partial F\to\partial G$ by $\kappa_1(e_i\cdots e_1f^\infty) = e_{i+1}e_{i+2}\cdots $ for $i\geq 1$ and $\kappa_1(f^\infty)=e_1e_2\cdots$, and $\kappa_2(e_ie_{i+1}\cdots)=e_{i-1}\cdots e_1$ for $i\geq 2$ and $\kappa_2(e_1e_2\cdots)=v$.
Clearly, $\kappa_1$ and $\kappa_2$ are bijective, and they are continuous and open since $\partial E$, $\partial F$, and $\partial G$ carry the discrete topologies.
One readily checks that $m_1,l_1\colon \partial E^{\geq 1}\to\N_0$ defined by $m_1(f^\infty)=0$, $m_1(e_i\cdots e_1f^\infty)=1$, and $l_1(x)=0$ for all $x\in\partial E^{\geq 1}$ satisfies
\[ \sigma_F^{m_1(x)}(\kappa_1(\sigma_E(x)))=\sigma_F^{l_1(x)}(\kappa_1(x)) \]
for all $x\in\partial E^{\geq 1}$, 
and that $m'_1,l'_1\colon \partial F^{\geq 1}\to\N_0$ defined by $m'_1(x)=1$ and $l'_1(x)=0$ satisfies
\[ \sigma_E^{m'_1(x)}(\kappa^{-1}_1(\sigma_F(x)))=\sigma_E^{l'_1(x)}(\kappa^{-1}_1(x)) \]
for all $x\in\partial F^{\geq 1}$.
So $\kappa_1$ is an orbit equivalence, since the maps $m_1,l_1,m'_1,l'_1$ are automatically continuous.
Similarly, $m_2,l_2\colon \partial F^{\geq 1}\to\N_0$ defined by $m_2(x)=1$ and $l_2(x)=0$, and $m'_2,l'_2\colon \partial G^{\geq 1}\to\N_0$ defined by $m'_2(x)=1$ and $l'_2(x)=0$, lets us conclude that $\kappa_2$ is an orbit equivalence.
\end{example}

%%%%%%%%%%%%%%%%%%%%%%%%%%%%%%%%%%%%%%%%%%%%%%%%%%%%%%%%%%
%%% SUBSECTION: Groupoid of germs %%%%%%%%%%%%%%%%%%%%%%%%%%%%%%%%
%%%%%%%%%%%%%%%%%%%%%%%%%%%%%%%%%%%%%%%%%%%%%%%%%%%%%%%%%%

\subsection{The pseudogroup $\mathcal{P}_E$ and the groupoid of germs of $\mathcal{P}_E$}
 
We now recall the groupoid of germs defined in \cite[Section~3]{MR2460017}.  

Let $X$ be a topological space.  A homeomorphism $\ftn{h}{U}{V}$ where $U$ and $V$ are open subsets of $X$ is called a \emph{partial homeomorphism}.  Under composition and inverse, the collection of partial homeomorphisms on $X$ is an inverse semigroup.  A \emph{pseudogroup} on $X$ is a family of partial homeomorphisms of $X$ stable under composition and inverse.  

Let $\mathcal{P}$ be a pseudogroup on $X$.  A partial homeomorphism $\ftn{h}{U}{V}$ is said to \emph{locally belong to $\mathcal{P}$} if for all $x \in U$, there exists an open neighborhood $W$ of $x$ and there exists $g \in \mathcal{P}$ such that $h \vert_W = g \vert_W$.  The pseudogroup $\mathcal{P}$ is \emph{ample} if each partial homeomorphism $\ftn{h}{U}{V}$ that locally belongs to $\mathcal{P}$ must also be element in $\mathcal{P}$.  

\begin{definition}
Let $\mathcal{P}$ be a pseudogroup on the topological space $X$.  The \emph{groupoid of germs} of $\mathcal{P}$ is 
\[
\mathcal{G}_{\mathcal{P}} = \setof{ [ x, h, y ] }{  h \in \mathcal{P}, y \in \mathrm{dom} ( h ), x = h (y) }
\]
where $[ x , h, y ] = [x , g , y ]$ if and only if there exists a neighborhood $V$ of $y$ in $X$ such that $h \vert_V = g \vert_V$.

The range and source maps are given by 
\[
r ([ x, h , y ] )= x \quad \text{and} \quad s([ x, h , y ]) = y.
\]
The partially defined product is $[x , h , y ][ y, g, z ] = [x , h \circ g , z ]$, undefined otherwise and the inverse $[x , h , y ]^{-1} = [ y, h^{-1} , x ]$.  The groupoid $\mathcal{G}_\mathcal{P}$ is given the topology given by basic open sets 
\[
\mathcal{U} ( U, h , V ) = \setof{ [ x , g, y ] \in \mathcal{G}_{\mathcal{P}} }{ x \in U, y \in V }
\]
where $U$ and $V$ are open subsets of $X$ and $h \in \mathcal{P}$.
\end{definition}

We recall how to construct a pseudogroup from an \'{e}tale groupoid $\mathcal{G}$.  A subset $A$ of a groupoid $\mathcal{G}$ is called a \emph{bisection} if $r\vert_A$ and $s\vert_A$ are injective functions.  Then the set of all open bisections $\mathcal{S}( \mathcal{G} )$ forms an inverse semigroup with composition law 
\[
AB = \setof{ \gamma \gamma' }{  ( \gamma , \gamma' )  \in ( A \times B ) \cap \mathcal{G}^{(2)} }
\]
and 
\[
A^{-1} = \setof{ \gamma^{-1} }{ \gamma \in A }.
\]

Let $A$ be an open bisection.  Then define $\ftn{ \alpha_A }{ s( A ) }{ r(A) }$ by $\alpha_A ( s(\gamma ) ) = r(\gamma)$ for all $\gamma \in A$.  One checks that $\alpha_A$ is a homeomorphism.   Then  the pseudogroup on $\mathcal{G}^{(0)}$ is  
\[
\mathcal{P}( \mathcal{G} ) = \setof{ \alpha_A }{ \text{$A$ is an open bisection} }.
\]

\begin{assumption}
Isomorphisms between topological groupoids are isomorphisms between groupoids that are also homeomorphisms.  
\end{assumption}

The following proposition follows from \cite[Proposition~3.6]{MR2460017} and the proofs of \cite[Proposition~3.2 and Corollary~3.3]{MR2460017}.
\begin{proposition}
Let $\mathcal{G}$ be an \'{e}tale groupoid.  Define $\ftn{\phi_\mathcal{G}}{\mathcal{G}}{\mathcal{G}_{ \mathcal{P} (\mathcal{G} ) }}$ by
\[
\phi_\mathcal{G} ( \gamma ) = [ r( \gamma ), \alpha_A , s( \gamma ) ]
\]
where $A$ is an open bisection containing $\gamma$.  Then $\phi_\mathcal{G}$ is a well-defined surjective homomorphism of groupoids.  Moreover, if $\mathcal{G}$ is Hausdorff and topologically principal, then $\phi_\mathcal{G}$ is an isomorphism.
\end{proposition}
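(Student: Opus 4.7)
My plan is to prove the proposition in four steps: well-definedness of $\phi_\mathcal{G}$, the groupoid-homomorphism property, surjectivity, and, under the extra hypotheses, injectivity together with the verification that $\phi_\mathcal{G}$ is open and continuous. The main obstacle will be the injectivity step, where I will need to combine topological principality with the Hausdorff assumption.

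For well-definedness, I would use that since $\mathcal{G}$ is \'etale, open bisections form a basis for its topology, so every $\gamma$ lies in some open bisection. If $A$ and $B$ are two such bisections containing $\gamma$, then $A \cap B$ is again an open bisection, $s(A \cap B)$ is an open neighbourhood of $s(\gamma)$, and $\alpha_A$ and $\alpha_B$ both restrict to $\alpha_{A \cap B}$ there; this gives equality of germs. For the homomorphism property, given composable $\gamma, \gamma'$ and open bisections $A \ni \gamma$, $B \ni \gamma'$, the product $AB$ is an open bisection containing $\gamma\gamma'$ and $\alpha_{AB}$ agrees locally with $\alpha_A \circ \alpha_B$, so $\phi_\mathcal{G}(\gamma\gamma') = \phi_\mathcal{G}(\gamma)\phi_\mathcal{G}(\gamma')$. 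Surjectivity is immediate: every element of $\mathcal{P}(\mathcal{G})$ is by definition some $\alpha_A$, and the germ $[x, \alpha_A, y]$ is then the image of the unique $\gamma \in A$ with $s(\gamma) = y$.

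The hard step is injectivity. Suppose $\phi_\mathcal{G}(\gamma) = \phi_\mathcal{G}(\gamma')$ and pick open bisections $A \ni \gamma$, $B \ni \gamma'$. From the germ equality I extract an open neighbourhood $V \subseteq s(A) \cap s(B)$ of $u := s(\gamma) = s(\gamma')$ on which $\alpha_A = \alpha_B$. For each $z \in V$ let $a(z) \in A$ and $b(z) \in B$ be the unique elements with source $z$; then $r(a(z)) = \alpha_A(z) = \alpha_B(z) = r(b(z))$, so $a(z)^{-1}b(z)$ lies in the isotropy group $\mathcal{G}_z^z$. Topological principality of $\mathcal{G}$ ensures that the set of $z \in V$ with trivial isotropy is dense, and for any such $z$ one has $a(z) = b(z)$. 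Taking a net of such $z$ converging to $u$, the fact that $s|_A$ and $s|_B$ are homeomorphisms onto open subsets of $\mathcal{G}^{(0)}$ gives $a(z) \to \gamma$ and $b(z) \to \gamma'$; the Hausdorff hypothesis on $\mathcal{G}$ then forces $\gamma = \gamma'$.

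To upgrade the bijective homomorphism to an isomorphism of topological groupoids, I would check that for each open bisection $A$, the image $\phi_\mathcal{G}(A)$ coincides with the basic open set $\mathcal{U}(r(A), \alpha_A, s(A))$, where injectivity is again used to pin down the germs. Since the open bisections form a basis for the topology on $\mathcal{G}$ and the sets $\mathcal{U}(U, h, V)$ generate the topology on $\mathcal{G}_{\mathcal{P}(\mathcal{G})}$, this simultaneously shows that $\phi_\mathcal{G}$ is continuous and open. Combined with bijectivity, this gives the desired isomorphism.
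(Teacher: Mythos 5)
Your argument is correct, and it is essentially the standard proof of this statement. The paper itself does not write out a proof; it defers entirely to Renault [Ren08, Proposition~3.6 and the proofs of Proposition~3.2 and Corollary~3.3], and the argument given there is the one you reconstruct: germs are independent of the chosen bisection because two open bisections through $\gamma$ intersect in a third; the identity $\alpha_{AB}=\alpha_A\circ\alpha_B$ gives multiplicativity; surjectivity is by definition of $\mathcal{P}(\mathcal{G})$; and injectivity is precisely where Hausdorffness and topological principality enter, via the observation that $z\mapsto a(z)$ and $z\mapsto b(z)$ are continuous sections of the source map which agree on the dense set of units with trivial isotropy, hence (the agreement set of two continuous maps into a Hausdorff space being closed) on all of $V$, and in particular at $u$. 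Your net formulation of this last step is equivalent. Two minor remarks. First, in the injectivity step you should say explicitly that equality of germs already forces $s(\gamma)=s(\gamma')$ and $r(\gamma)=r(\gamma')$; you use this implicitly when you set $u:=s(\gamma)=s(\gamma')$. Second, your identification $\phi_{\mathcal{G}}(A)=\mathcal{U}(r(A),\alpha_A,s(A))$ presupposes the standard definition of the basic sets $\mathcal{U}(U,h,V)$ as germs of $h$ itself at points of $V$; as literally displayed in the paper the condition on $g$ is omitted (presumably a typo), and with that reading one only gets an inclusion. This does not affect your conclusion, since continuity ($\phi_{\mathcal{G}}^{-1}(\mathcal{U}(U,h,V))\subseteq r^{-1}(U)\cap s^{-1}(V)$) and openness can be checked directly with either reading, but it is worth flagging which definition you are using.
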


As an immediate consequence, we get the following corollary.  

\begin{corollary}\label{cor:pseudogroup-groupoid-iso}
Let $\mathcal{G}$ and $\mathcal{H}$ be \'{e}tale groupoids.  Suppose there exists a homeomorphism $\ftn{\kappa}{\mathcal{G}^{(0)}}{\mathcal{H}^{(0)}}$ such that 
\[
\kappa \circ \mathcal{P}(\mathcal{G}) \circ \kappa^{-1} := \setof{ \kappa \circ g \circ \kappa^{-1} }{ g \in \mathcal{P}( \mathcal{G} ) } = \mathcal{P} (\mathcal{H}).
\]
Then there exists an isomorphism $\ftn{\psi_\kappa }{\mathcal{G}_{\mathcal{P}(\mathcal{G})} }{\mathcal{G}_{\mathcal{P}(\mathcal{H})}}$ defined by 
\[
\psi_\kappa ( [ x, g, y ] ) = [ \kappa(x) , \kappa \circ g \circ \kappa^{-1} , \kappa(y) ].
\]

Consequently, if $\mathcal{G}$ and $\mathcal{H}$ are Hausdorff and topological principal, then $\psi_\kappa$ induces an isomorphism $\ftn{ \widetilde{\psi}_\kappa}{ \mathcal{G} }{ \mathcal{H} }$ such that $\widetilde{\psi}_\kappa \vert_{ \mathcal{G}^{(0)} } = \kappa$.
\end{corollary}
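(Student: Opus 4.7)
The plan is to verify that the formula for $\psi_\kappa$ gives a well-defined map, and then check that it is a groupoid isomorphism. First, I would note that the hypothesis $\kappa\circ\mathcal{P}(\mathcal{G})\circ\kappa^{-1}=\mathcal{P}(\mathcal{H})$ ensures $\kappa\circ g\circ \kappa^{-1}\in\mathcal{P}(\mathcal{H})$ whenever $g\in\mathcal{P}(\mathcal{G})$, so the right-hand side of the defining formula at least lies in $\mathcal{G}_{\mathcal{P}(\mathcal{H})}$. Well-definedness on germs is then an immediate translation: if $g$ and $g'$ agree on a neighborhood $V$ of $y$ in $\mathcal{G}^{(0)}$, then $\kappa\circ g\circ\kappa^{-1}$ and $\kappa\circ g'\circ\kappa^{-1}$ agree on the open neighborhood $\kappa(V)$ of $\kappa(y)$, so their germs at $\kappa(y)$ coincide.

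Next I would check the algebraic structure: compatibility with products follows from $(\kappa\circ h\circ\kappa^{-1})\circ(\kappa\circ g\circ\kappa^{-1})=\kappa\circ(h\circ g)\circ\kappa^{-1}$ and matches ranges/sources via $\kappa$, and compatibility with inverses is equally direct. For bijectivity, the assumption gives a two-sided inverse by replacing $\kappa$ with $\kappa^{-1}$, which satisfies the analogous conjugation identity; the resulting map $[x',g',y']\mapsto[\kappa^{-1}(x'),\kappa^{-1}\circ g'\circ\kappa,\kappa^{-1}(y')]$ is a set-theoretic inverse of $\psi_\kappa$. Continuity and openness are verified on the basis $\{\mathcal{U}(U,h,V)\}$: the map $\psi_\kappa$ sends $\mathcal{U}(U,h,V)$ to $\mathcal{U}(\kappa(U),\kappa\circ h\circ\kappa^{-1},\kappa(V))$ (and similarly for the inverse), which is again a basic open set since $\kappa$ is a homeomorphism of unit spaces. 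Thus $\psi_\kappa$ is an isomorphism of topological groupoids.

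For the consequent, I would invoke the preceding proposition twice: when $\mathcal{G}$ and $\mathcal{H}$ are Hausdorff and topologically principal, the maps $\phi_\mathcal{G}\colon\mathcal{G}\to\mathcal{G}_{\mathcal{P}(\mathcal{G})}$ and $\phi_\mathcal{H}\colon\mathcal{H}\to\mathcal{G}_{\mathcal{P}(\mathcal{H})}$ are isomorphisms of topological groupoids. Setting $\widetilde{\psi}_\kappa=\phi_\mathcal{H}^{-1}\circ\psi_\kappa\circ\phi_\mathcal{G}$ then gives an isomorphism $\mathcal{G}\to\mathcal{H}$. The verification that $\widetilde{\psi}_\kappa$ restricts to $\kappa$ on $\mathcal{G}^{(0)}$ uses that, under the identification of units with the germs $[x,\alpha_A,x]$ for any open bisection $A$ containing $x$ (which we can take to lie in $\mathcal{G}^{(0)}$, so $\alpha_A$ is locally the identity), $\psi_\kappa$ sends $[x,\mathrm{id},x]$ to $[\kappa(x),\mathrm{id},\kappa(x)]$, and then $\phi_\mathcal{H}^{-1}$ sends this back to $\kappa(x)$.

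I do not anticipate a real obstacle here; the argument is essentially a matter of chasing definitions, and the only substantive input is the previous proposition. The one point requiring a little care is the openness of $\psi_\kappa$ — one must recognize that conjugation by $\kappa$ transports a basic open set in $\mathcal{G}_{\mathcal{P}(\mathcal{G})}$ to a basic open set in $\mathcal{G}_{\mathcal{P}(\mathcal{H})}$ rather than merely to an open set — but this is immediate once the bases are written out side by side.
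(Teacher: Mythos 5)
Your proof is correct and follows exactly the route the paper intends: the paper states this corollary as an ``immediate consequence'' of the preceding proposition without writing out details, and your verification (well-definedness on germs via conjugation by $\kappa$, the algebraic checks, transport of basic open sets, and $\widetilde{\psi}_\kappa=\phi_{\mathcal H}^{-1}\circ\psi_\kappa\circ\phi_{\mathcal G}$ for the consequent) is precisely the omitted argument.
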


Of particular interest to us is the pseudogroup of the \'{e}tale groupoid $\mathcal{G}_E$ for a graph $E$.  So, for a graph $E$, we denote $\mathcal{P} ( \mathcal{G}_E )$ by $\mathcal{P}_E$ and we call $\mathcal{P}_E$ the \emph{pseudogroup of $E$}.

In \cite[Proposition~3.4]{arXiv:1410.2308}, the authors prove that $E$ and $F$ are orbit equivalent if and only if the pseudogroups of $E$ and $F$ are isomorphic, i.e., there exists a homeomorphism $\ftn{\kappa}{ \partial E }{ \partial F }$ such that 
\[
\kappa \circ \mathcal{P}_E \circ \kappa_E^{-1} = \setof{ \kappa \circ g \circ \kappa^{-1} }{ g \in \mathcal{P}_E } = \mathcal{P}_F.
\]  
They actually prove a stronger statement in the sense that the orbit equivalence between $E$ and $F$ induces the isomorphism between the pseudogroups of $E$ and $F$ and vice versa.  We record this in the following proposition.

\begin{proposition}[{\cite[Proposition~3.4]{arXiv:1410.2308}}]\label{prop:arXiv:1410.2308:Prop3.4}
Let $E$ and $F$ be graphs and $\kappa$ from $\partial E$ to $\partial F$ be a homeomorphism.  Then $\kappa$ is an orbit equivalence (preserving periodic points) if and only if $\kappa \circ \mathcal{P}_E \circ \kappa^{-1} = \mathcal{P}_F$ (and for all $x \in \partial E_{\iso}$, $x$ is eventually periodic if and only if $\kappa(x)$ is eventually periodic).  
\end{proposition}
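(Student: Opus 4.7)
The plan is to deduce this proposition directly from its cited predecessor, \cite[Proposition~3.4]{arXiv:1410.2308}, which already establishes the unparenthesised biconditional: $\kappa$ is an orbit equivalence if and only if $\kappa \circ \mathcal{P}_E \circ \kappa^{-1} = \mathcal{P}_F$. Invoking this result, I only need to address the parenthesised refinement.

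The key observation is essentially tautological. The left-hand parenthetical (that $\kappa$ preserves periodic points, i.e., condition~\ref{def:oe3} of Definition~\ref{d:orbitequivalence}) and the right-hand parenthetical (that for every $x \in \partial E_\iso$, $x$ is eventually periodic if and only if $\kappa(x)$ is eventually periodic) are literally the same statement about the homeomorphism $\kappa$. Since $\kappa$ is a homeomorphism it restricts to a bijection $\partial E_\iso \to \partial F_\iso$, so this condition is unambiguously symmetric in $E$ and $F$ and requires no reformulation to move from one side of the biconditional to the other.

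Conjoining this common condition to both sides of the base biconditional then yields the refined biconditional. In short, the refined statement is nothing more than \cite[Proposition~3.4]{arXiv:1410.2308} with the same extra clause appended to both sides; no further work is required once one recognises that the two parentheticals are the same condition written twice.

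If one insists on identifying a main obstacle, it would lie in the direction where orbit equivalence is to be recovered from pseudogroup equality: establishing the continuity of the cocycles $l, m, l', m'$ from Definition~\ref{d:orbitequivalence}. However, this is entirely handled by \cite[Proposition~3.4]{arXiv:1410.2308}, so there is genuinely nothing left for us to prove beyond the bookkeeping above. The usefulness of this formulation lies not in its depth but in the fact that it sets up the dictionary, exploited in subsequent sections, between periodic-point-preserving orbit equivalences and a marked version of pseudogroup isomorphism.
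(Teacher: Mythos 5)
Your proposal matches the paper's treatment: the proposition is simply \cite[Proposition~3.4]{arXiv:1410.2308} recorded with the identical periodic-point condition appended to both sides of the biconditional, so no new argument is needed beyond citing that result. The paper gives no further proof, exactly as you observe.
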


\begin{proposition}\label{prop:orbitequivalence-equivalencerelation}
For all graphs $E_1$, $E_2$, and $E_3$, if $\kappa_1$ from $\partial E_1$ to $\partial E_2$ and $\kappa_2$ from $\partial E_2$ to $\partial E_3$ are orbit equivalences (preserving periodic points), then $\kappa_2 \circ \kappa_1$ from $\partial E_1$ to $\partial E_3$ is an orbit equivalence (preserving periodic points).
\end{proposition}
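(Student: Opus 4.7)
The plan is to reduce to the pseudogroup characterization of Proposition~\ref{prop:arXiv:1410.2308:Prop3.4}, which recasts the orbit-equivalence condition as the algebraic identity $\kappa \circ \mathcal{P}_E \circ \kappa^{-1} = \mathcal{P}_F$; this identity is transparently closed under composition. Concretely, I would first apply Proposition~\ref{prop:arXiv:1410.2308:Prop3.4} in the forward direction to the given orbit equivalences to obtain
\[
\kappa_1 \circ \mathcal{P}_{E_1} \circ \kappa_1^{-1} = \mathcal{P}_{E_2} \quad \text{and} \quad \kappa_2 \circ \mathcal{P}_{E_2} \circ \kappa_2^{-1} = \mathcal{P}_{E_3}.
\]
Setting $\kappa := \kappa_2 \circ \kappa_1$, which is a homeomorphism $\partial E_1 \to \partial E_3$ as the composition of two homeomorphisms, a one-line substitution gives
\[
\kappa \circ \mathcal{P}_{E_1} \circ \kappa^{-1} = \kappa_2 \circ \bigl(\kappa_1 \circ \mathcal{P}_{E_1} \circ \kappa_1^{-1}\bigr) \circ \kappa_2^{-1} = \kappa_2 \circ \mathcal{P}_{E_2} \circ \kappa_2^{-1} = \mathcal{P}_{E_3},
\]
and applying the reverse direction of Proposition~\ref{prop:arXiv:1410.2308:Prop3.4} then shows that $\kappa$ is an orbit equivalence.

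For the additional clause that periodic points are preserved, I would use that any homeomorphism sends isolated points to isolated points, so $\kappa_1$ restricts to a bijection between the sets of isolated points in $\partial E_1$ and $\partial E_2$. Given an isolated $x \in \partial E_1$, the element $\kappa_1(x) \in \partial E_2$ is therefore also isolated, and I can chain the two instances of condition~\ref{def:oe3} of Definition~\ref{d:orbitequivalence}: $x$ is eventually periodic iff $\kappa_1(x)$ is eventually periodic (by the periodic-preservation of $\kappa_1$), iff $\kappa_2(\kappa_1(x)) = \kappa(x)$ is eventually periodic (by the periodic-preservation of $\kappa_2$, applied legitimately since $\kappa_1(x)$ is isolated in $\partial E_2$). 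This verifies condition~\ref{def:oe3} for $\kappa$.

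I do not anticipate any genuine obstacle here; the entire content of the proposition is that conjugation by homeomorphisms composes and that isolated points and eventual periodicity are transported along the given equivalences in the expected way. The only small subtlety worth flagging in the write-up is the observation that $\kappa_1$ really does map isolated points to isolated points, so that condition~\ref{def:oe3} for $\kappa_2$ can be applied at $\kappa_1(x)$.
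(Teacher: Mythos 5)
Your proposal is correct and follows essentially the same route as the paper's proof: both reduce to the pseudogroup identity of Proposition~\ref{prop:arXiv:1410.2308:Prop3.4}, compose the conjugations, and chain condition~\ref{def:oe3} through the isolated point $\kappa_1(x)$. Your explicit remark that homeomorphisms carry isolated points to isolated points is a small point the paper leaves implicit, but it is the same argument.
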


\begin{proof}
Let $E_1$, $E_2$, and $E_3$ be graphs.  Suppose $\ftn{\kappa_1}{\partial E_1}{\partial E_2}$ and $\ftn{\kappa_2}{\partial E_2}{\partial E_3}$ are orbit equivalences.  By Proposition~\ref{prop:arXiv:1410.2308:Prop3.4}, we have $\kappa_1 \circ \mathcal{P}_{E_1} \circ \kappa_1^{-1} = \mathcal{P}_{E_2}$ and $\kappa_2 \circ \mathcal{P}_{E_2} \circ \kappa_2^{-1} = \mathcal{P}_{E_3}$.  It follows that $( \kappa_2 \circ \kappa_1 ) \circ \mathcal{P}_{E_1} \circ  ( \kappa_2 \circ \kappa_1 )^{-1}   = \mathcal{P}_{E_3}$.  Thus, by Proposition~\ref{prop:arXiv:1410.2308:Prop3.4}, $\ftn{ \kappa_2 \circ \kappa_1 }{ \partial E_1 }{ \partial E_3 }$ is an orbit equivalence.  

Suppose $\kappa_1$ and $\kappa_2$ are orbit equivalences preserving periodic points.  Then, as above $( \kappa_2 \circ \kappa_1 ) \circ \mathcal{P}_{E_1} \circ  ( \kappa_2 \circ \kappa_1 )^{-1}   = \mathcal{P}_{E_3}$.  Moreover, since $\kappa_1$ and $\kappa_2$ are orbit equivalences preserving periodic points, for all isolated points $x$ in $\partial E_1$, $x$ is eventually periodic if and only if $\kappa_1(x)$ is eventually periodic if and only if $(\kappa_2 \circ \kappa_1)(x)$ is eventually periodic.

\end{proof}

We are now able to prove a stronger version of \cite[Theorem~5.1]{arXiv:1410.2308}.  This version will be important for us in the proof of Theorem~\ref{thm:main-result}.

\begin{theorem}[{cf.~\cite[Theorem~5.1]{arXiv:1410.2308}}]\label{thm:arXiv:1410.2308}
Let $E$ and $F$ be graphs satisfying Condition~(L).  Suppose $\ftn{\kappa}{\partial E}{\partial F}$ is an orbit equivalence from $E$ to $F$.  Then there exists an isomorphism $\ftn{\phi}{\mathcal{G}_E}{\mathcal{G}_F}$ such that $\phi \vert_{\partial E} = \kappa$.  
\end{theorem}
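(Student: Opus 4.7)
The plan is to stitch together the two general results just proved. By Proposition~\ref{prop:arXiv:1410.2308:Prop3.4}, the hypothesis that $\kappa$ is an orbit equivalence translates to the pseudogroup identity $\kappa \circ \mathcal{P}_E \circ \kappa^{-1} = \mathcal{P}_F$. Corollary~\ref{cor:pseudogroup-groupoid-iso} then immediately produces an isomorphism $\psi_\kappa \colon \mathcal{G}_{\mathcal{P}_E} \to \mathcal{G}_{\mathcal{P}_F}$ of the groupoids of germs, and guarantees that this descends to an isomorphism $\widetilde{\psi}_\kappa \colon \mathcal{G}_E \to \mathcal{G}_F$ with $\widetilde{\psi}_\kappa\vert_{\partial E} = \kappa$, \emph{provided} that $\mathcal{G}_E$ and $\mathcal{G}_F$ are Hausdorff and topologically principal. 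So we simply set $\phi = \widetilde{\psi}_\kappa$, and the only thing left to verify is that the hypothesis on $E$ and $F$ guarantees these two groupoid properties.

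Hausdorffness of the graph groupoid $\mathcal{G}_E$ is a general fact that follows directly from the basis description given in Definition~\ref{d:graphgroupoid}: two distinct elements of $\mathcal{G}_E$ can be separated by basic open sets of the form $\mathcal{Z}_E(U,m,n,V)$, as was already noted when asserting that $\mathcal{G}_E$ is étale. (I would recall the brief verification for completeness but not dwell on it.)

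The substantive step is that Condition~(L) implies $\mathcal{G}_E$ is topologically principal, i.e.\ that the units with trivial isotropy are dense in $\partial E$. For $x \in \partial E$, an isotropy element $(x,k,x)$ with $k \neq 0$ forces $\sigma_E^m(x) = \sigma_E^n(x)$ for some $m\neq n$, and conversely; so $x$ has non-trivial isotropy precisely when $x$ is eventually periodic. Under Condition~(L) every cycle $\nu$ has an exit, so for any eventually periodic $x = \mu\nu^\infty$ and any basic neighborhood $\mathcal{Z}_E(\mu\nu^k \setminus F)$ of $x$, we can, after passing far enough along $\nu^\infty$, pick an exit edge at some vertex of $\nu$ and concatenate with a path leading to a different cycle or extending into a non-periodic boundary path; the resulting point lies in the chosen neighborhood and is not eventually periodic. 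This density argument, applied symmetrically to $F$, completes the verification; it is the only place where Condition~(L) is used, and it is also the main (and really only) obstacle in the proof.

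With Hausdorffness and topological principality of $\mathcal{G}_E$ and $\mathcal{G}_F$ in hand, Corollary~\ref{cor:pseudogroup-groupoid-iso} applied to the homeomorphism $\kappa$ produces the desired groupoid isomorphism $\phi$ restricting to $\kappa$ on the unit space, finishing the proof.
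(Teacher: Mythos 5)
Your proof follows the paper's argument exactly: Proposition~\ref{prop:arXiv:1410.2308:Prop3.4} converts the orbit equivalence into the pseudogroup identity, and Corollary~\ref{cor:pseudogroup-groupoid-iso} upgrades this to a groupoid isomorphism restricting to $\kappa$ once $\mathcal{G}_E$ and $\mathcal{G}_F$ are known to be Hausdorff and topologically principal; the paper simply cites \cite[Proposition~2.3]{arXiv:1410.2308} for the latter where you sketch the density argument. (One small quibble with that sketch: a perturbed point ending in ``a different cycle'' is still eventually periodic, so the exit-taking must be iterated indefinitely or terminate at a singular vertex --- but this is precisely the content of the cited result, so nothing is lost.)
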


\begin{proof}
By Proposition~\ref{prop:arXiv:1410.2308:Prop3.4}, $\kappa \circ \mathcal{P}_E \circ \kappa^{-1} = \mathcal{P}_F$.  Since $E$ and $F$ satisfy Condition~(L), by \cite[Proposition~2.3]{arXiv:1410.2308}, $\mathcal{G}_E$ and $\mathcal{G}_F$ are topologically principal.  By Corollary~\ref{cor:pseudogroup-groupoid-iso}, there exists an isomorphism  $\ftn{\phi}{\mathcal{G}_E}{\mathcal{G}_F}$ such that $\phi \vert_{ \partial E } = \kappa$.
\end{proof}

%%%%%%%%%%%%%%%%%%%%%%%%%%%%%%%%%%%%%%%%%%%%%%%%%%%%%%%%%%
%%% SUBSECTION: unplug %%%%%%%%%%%%%%%%%%%%%%%%%%%%%%%%
%%%%%%%%%%%%%%%%%%%%%%%%%%%%%%%%%%%%%%%%%%%%%%%%%%%%%%%%%%

\subsection{The unplugged graph and orbit equivalence}\label{converse}

For a graph $E$, let $E^0_{\cycle}$ be the set of vertices of $E$ that is
on a vertex-simple cycle with no exits.  Suppose $E$ satisfies the property that if $\nu$ is a vertex-simple cycle with no exits, then $\nu$ is a loop.  This entails that every vertex $v \in E^0_{\cycle}$ supports a unique loop $e_v$.  Note that if $v \in E^0_{\cycle}$ and $e \in E^1$
such that $s( e ) = v$, then $e = e_v$.  Denote the set of all loops based at a vertex in $E^0_\cycle$ by $E^1_\cycle$.  Note that $s ( E^1_\cycle ) = r ( E^1_\cycle ) = E^0_\cycle$.  Also note that if $e, f \in E^1_\cycle$ with $s(e) = s( f)$ (equivalently, $r(e) = r( f)$), then $e = f$.  

Let $E$ be a graph such that all vertex-simple cycles with no exits are loops.  Let the \emph{unplugged graph} $E_\curlyvee$ of $E$ be the graph defined by 
\[
E^0_\curlyvee = E^0 \quad \text{and} \quad E^1_\curlyvee = E^1 \setminus E^1_\cycle
\]
with the range and source maps of $E_\curlyvee$  the restrictions of the range and source maps of $E$ respectively.

\begin{proposition}\label{prop:oe-unplug}
Let $E$ be a graph such that each vertex-simple cycle with no exits is a loop.  Define $\ftn{\kappa_E}{\partial E_{\curlyvee}}{\partial E}$ by 
\[
\kappa_E (x) = 
\begin{cases}
x e_{r(x)}^\infty, &\text{if $x \in E^*_\curlyvee$ with $r(x) \in E^0_\cycle$} \\
x, &\text{otherwise.}
\end{cases}
\]
Then $\kappa_E$ is an orbit equivalence such that for each isolated point $x \in \partial E_\curlyvee$, $r_{E_\curlyvee}(x) \in E^0_{\cycle}$ if and only if $\kappa_E(x)$ is an isolated point in $\partial E$ that is eventually periodic.
\end{proposition}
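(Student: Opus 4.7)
My plan is to verify in order that $\kappa_E$ is a well-defined bijection, a homeomorphism, and an orbit equivalence, and finally to establish the characterization of isolated eventually periodic points. The first step rests on pinning down the structure of $\partial E$ relative to $\partial E_\curlyvee$: every $v \in E^0_\cycle$ satisfies $vE^1 = \{e_v\}$, so $v$ is regular in $E$ but becomes a sink in $E_\curlyvee$; an inductive argument shows that any infinite path of $E$ using some edge of $E^1_\cycle$ must thereafter repeat that same loop forever, so every $y \in E^\infty$ lies either in $E_\curlyvee^\infty$ or has the form $\mu e_v^\infty$ for a unique $v \in E^0_\cycle$ and $\mu \in E_\curlyvee^*$ with $r(\mu) = v$; and every finite path in $\partial E$ avoids $E^1_\cycle$, since any such edge would force the path to end at a regular vertex. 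This gives an explicit inverse (the identity outside paths of the form $\mu e_v^\infty$, which are sent to $\mu$) and reduces continuity and openness of $\kappa_E$ to a routine check on cylinder sets.

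For the orbit equivalence conditions I take $l \equiv 1$ and $m \equiv 0$, reducing to
\[
\kappa_E(\sigma_{E_\curlyvee}(x)) = \sigma_E(\kappa_E(x)),
\]
which splits into the case where $\kappa_E$ acts as the identity on $x$ (both sides equal $\sigma_E(x)$) and the case $r(x) = v \in E^0_\cycle$ (both sides equal $\sigma_E(x e_v^\infty)$). The reverse direction is the main technical obstacle: when $y = e_v^\infty$, the point $\kappa_E^{-1}(y) = v$ has length zero and cannot be shifted, so $l'$ cannot be the constant $1$. I handle this by setting $m' \equiv 0$, $l'(y)=0$ on $\{e_v^\infty : v \in E^0_\cycle\}$, and $l'(y)=1$ elsewhere. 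To see $l'$ is continuous I verify the partition is clopen: each $\{e_v^\infty\}$ is open via the cylinder $\mathcal{Z}_E(e_v)$, and any $y$ in the complement admits a cylinder neighbourhood missing all the $e_v^\infty$ (if $y \in E_\curlyvee^\infty$, use the cylinder of its first edge; if $y = \mu e_v^\infty$ with $|\mu| \geq 1$, use $\mathcal{Z}_E(\mu e_v)$; if $y$ is finite, use $\mathcal{Z}_E(y)$, since $r(y) \in E^0_\sing$ prevents $y$ from being a prefix of any $e_v^\infty$). The required identity then follows by a case split paralleling the forward direction.

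For the final characterization, I would first note that $E_\curlyvee$ has no cycles with no exits (such a cycle would also be exit-free in $E$ by the same local argument, hence a loop by hypothesis, hence in $E^1_\cycle$ and therefore removed), so $\partial E_\curlyvee$ contains no eventually periodic isolated points, and every isolated $x \in \partial E_\curlyvee$ is either a finite path ending at a sink of $E_\curlyvee$ (so $r_{E_\curlyvee}(x) \in E^0_\sink \sqcup E^0_\cycle$) or an eventually non-wandering infinite path. When $r_{E_\curlyvee}(x) \in E^0_\cycle$, $\kappa_E(x) = x e_{r(x)}^\infty$ is eventually periodic and isolated via $\mathcal{Z}_E(x e_{r(x)}) = \{\kappa_E(x)\}$; in all other cases $\kappa_E(x) = x$ is either finite or has a non-wandering tail that avoids $E^1_\cycle$, hence is not eventually periodic in $E$. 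Conversely, if $\kappa_E(x)$ is isolated and eventually periodic, then $\kappa_E(x) = \mu \nu^\infty$ with $\nu$ a vertex-simple cycle with no exits in $E$; the loop hypothesis forces $\nu = e_v$ for some $v \in E^0_\cycle$, and the inverse formula yields $x = \mu$ with $r_{E_\curlyvee}(x) = v \in E^0_\cycle$.
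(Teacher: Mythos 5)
Your proposal is correct and follows essentially the same route as the paper: the explicit inverse, the continuity/openness check on cylinder sets, the choice $l\equiv 1$, $m\equiv 0$ for the forward relation, and the function that is $0$ on $\{e_v^\infty : v\in E^0_\cycle\}=\kappa_E(E^0_\cycle)$ and $1$ elsewhere for the reverse relation (the paper calls this function $m$ and proves its continuity via Lemma~\ref{l:sinks-closed} rather than by exhibiting cylinder neighbourhoods, but the content is the same). Your treatment of the final characterization is more detailed than the paper's one-line remark, and it is sound.
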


\begin{proof}
A computation shows that $\kappa_E$ is a bijection with $\kappa_E^{-1}(x)=\mu$ when $x=\mu e_{r(\mu)}^\infty$ for some $\mu=e_1\cdots e_n$ with $r(\mu)\in E_\cycle^0$ and $e_n\neq e_{r(\mu)}$, and $\kappa_E^{-1}(x)=x$ for all other $x$.  
Let $\mu = e_1 e_2 \cdots e_n \in E^*$.  
Suppose $\mu\in E^*_\curlyvee$.  Then
%Suppose for each $i$, $e_i \notin E^1_\cycle$.  Then $\mu \in E^{*}_{\curlyvee}$.  Hence
\[
\kappa_E^{-1} ( \mathcal{Z}_E(\mu) ) = \setof{ \mu x \in \partial E_{\curlyvee} }{ x \in r(\mu) \partial E_{\curlyvee} } = \mathcal{Z}_{E_\curlyvee} (\mu)
\]
which is open.
Suppose $\mu\notin E^*_\curlyvee$.
%Suppose there exists $1 \leq i \leq n$ such that $e_i \in E^1_\cycle$.  
Let $i_0 \in \N$ with $1 \leq i_0 \leq n$ such that $e_{i_0} \in E^1_\cycle$ and $e_j \notin E^1_\cycle$ for all $j < i_0$.  
Then $e_{i_0} = e_k$ for all $i_0 \leq k \leq n$.  
Therefore, $\mathcal{Z}_E ( \mu ) = \mathcal{Z}_E ( e_1 \cdots e_{i_0-1} )=\{e_1\cdots e_{i_0-1}e_{i_0}^\infty\}$, where $ e_1 \cdots e_{i_0-1} = s(\mu)$ if $i_0 = 1$.  Hence, 
\[
\kappa_E^{-1} ( \mathcal{Z}_E(\mu) ) = %\kappa_E^{-1}( \mathcal{Z}_E ( e_1 \cdots e_{i_0-1} ) ) 
\kappa_E^{-1}(\{e_1\cdots e_{i_0-1}e_{i_0}^\infty\}) = \{e_1\cdots e_{i_0-1}\}
= \mathcal{Z}_{E_\curlyvee} ( e_1 \cdots e_{i_0-1})
\]
which is open.
Let $F$ be a finite nonempty subset of $r(\mu)E^1$.
If $r(\mu)\in E^0_\cycle$ then $F=\{e_{r(\mu)}\}$ so $\mathcal Z_E(\mu\setminus F)=\emptyset$, hence $\kappa_E^{-1} \left( \mathcal{Z}_E ( \mu \setminus F) \right)$ is trivially open.
If $r(\mu)\notin E^0_\cycle$ then $\mu\in E^*_\curlyvee$ and $F\subseteq r(\mu)E^1_\curlyvee$.  Hence
\begin{align*}
\kappa_E^{-1} \left( \mathcal{Z}_E ( \mu \setminus F) \right) &= \kappa_E^{-1}( \mathcal{Z}_E ( \mu ) ) \setminus \left( \bigcup_{ e \in F } \kappa_E^{-1} ( \mathcal{Z}_E(\mu e))\right) \\
&= \mathcal{Z}_{E_\curlyvee} ( \mu )  \setminus \left( \bigcup_{ e \in F }  \mathcal{Z}_{E_\curlyvee}(\mu e)\right)
= \mathcal{Z}_{E_\curlyvee} ( \mu \setminus F)
\end{align*}
which is open.
%Let $F$ be a finite subset of $r( \mu ) E^1$ and $\mathcal{Z}_E ( \mu \setminus F) \neq \emptyset$.  Then
%\[
%\kappa_E^{-1} \left( \mathcal{Z}_E ( \mu \setminus F) \right) = \kappa_E^{-1}( \mathcal{Z}_E ( \mu ) ) \setminus \left( \bigcup_{ e \in F } \kappa_E^{-1} ( \mathcal{Z}_E(\mu e))\right)
%\]
%Since $\mathcal{Z}_E ( \mu \setminus F) \neq \emptyset$, then $\mathcal{Z}_E ( \mu \setminus F ) = \mathcal{Z}_E ( \mu )$ or $r( \mu ) \notin E^0_\cycle$.  If $\mathcal{Z}_E ( \mu \setminus F ) = \mathcal{Z}_E ( \mu )$, then $\kappa_E^{-1} ( \mathcal{Z}_E(\mu) ) = \mathcal{Z}_{E_\curlyvee} ( \mu')$ for some $\mu' \in E^*_\curlyvee$.  Suppose $r( \mu ) \notin E^0_\cycle$.  Then $\mu \in E^*_{\curlyvee}$, $F$ is a finite subset of $r(\mu) E^1_{\curlyvee}$, and 
%\[
%\kappa_E^{-1} \left( \mathcal{Z} ( \mu \setminus F) \right) = \mathcal{Z}_{E_\curlyvee} ( \mu ) \setminus \left( \bigcup_{ e \in F }  \mathcal{Z}_{E_\curlyvee} (\mu e) \right) = \mathcal{Z}_{E_\curlyvee} ( \mu \setminus F ).
%\]
We have just shown that $\kappa_E$ is continuous.
 
Let $\mu \in E^*_{\curlyvee}$.  Then $\kappa_E ( \mathcal{Z}_{E_\curlyvee} ( \mu ) ) = \mathcal{Z}_E ( \mu )$.  Let $F$ be a finite subset of $r(\mu) E^1_{\curlyvee}$.  Then $\kappa_E ( \mathcal{Z}_{E_\curlyvee }( \mu e  ) ) = \mathcal{Z}_{E}( \mu e  ) $ for all $e\in F$, hence
\[
\kappa_E ( \mathcal{Z}_{E_\curlyvee} ( \mu \setminus F ) ) =  \mathcal{Z}_{E} ( \mu ) \setminus \left( \bigcup_{ e \in F }  \mathcal{Z}_{E}( \mu e  )  \right) = \mathcal Z_E(\mu\setminus F),
\]  
which is open.
%\[
%\kappa_E ( \mathcal{Z}_{E_\curlyvee} ( \mu \setminus G ) ) = \kappa_E ( \mathcal{Z}_{E_\curlyvee} ( \mu ) ) \setminus \left( \bigcup_{ e \in G } \kappa_E ( \mathcal{Z}_{E_\curlyvee }( \mu e  ) ) \right) =  \mathcal{Z}_{E} ( \mu ) \setminus \left( \bigcup_{ e \in G }  \mathcal{Z}_{E}( \mu e  )  \right),
%\]  
%where the first equal sign is true since $\kappa_E$ is a bijection.  
Hence, $\kappa_E$ is an open map.  Therefore, $\kappa_E$ is a homeomorphism.

Define $\ftn{m}{\partial E^{\geq 1}}{\N_0}$ by 
\[
m(y) = 
\begin{cases}
0, &\text{if $y \in \kappa_E ( E^0_\cycle )$} \\
1, &\text{otherwise.}
\end{cases}
\]
Since $E^0_\cycle \subseteq (E_\curlyvee)^0_\sink$, by Lemma~\ref{l:sinks-closed}, $E^0_\cycle$ is clopen in $\partial E_\curlyvee$.  Since $\kappa_E$ is a homeomorphism, $\kappa_E ( E^0_\cycle )$ is clopen in $\partial E$, so $m$ is continuous.
% if and only if $m \vert_{ \kappa_E ( E^0_\cycle ) }$ and $m \vert_{ \partial E^{\geq 1} \setminus \kappa_E ( E^0_\cycle ) }$ are continuous.  Since $m \vert_{ \kappa_E ( E^0_\cycle ) }$ and $m \vert_{ \partial E^{\geq 1} \setminus \kappa_E ( E^0_\cycle ) }$ are constant functions, both functions are continuous.  Thus, $m$ is continuous.

%
%Since $m^{-1}(\{0\}) = \bigcup_{v \in E^0_{\cycle}} \mathcal{Z}_E(v)$, we have that $m^{-1}(\{0\})$ is open.  Let $y \in m^{-1}(\{1\})$.  Let $\mu \in E^*$ such that $y \in \mathcal{Z}_E(\mu)$.  Since $m(y) =1$, we have that $\mu = e_1 e_2 \cdots e_n$ such that $s( e_1 ) \notin E^0_{\cycle}$.  Hence, for all $x \in \mathcal{Z}_E ( \mu )$, we have that $m(x) = 1$ which implies that $y \in \mathcal{Z}_E (\mu) \subseteq m^{-1}(\{1\})$.  So, $m$ is continuous.

A computation shows that 
\[
\sigma_E ( \kappa_E (x) ) = \kappa_E ( \sigma_{E_\curlyvee}(x)) \quad \text{and} \quad \sigma_{E_\curlyvee}^{m(y)} ( \kappa_E^{-1}(y)) = \kappa_E^{-1}( \sigma_{E}(y) ).
\]
Therefore, $\kappa_E$ is an orbit equivalence.  The last part of the proposition follows immediately from the construction of $\kappa_E$.
\end{proof}

The next lemma shows that we can adjust an orbit equivalence so that sinks are sent to sinks.  This will be used in the proof of Theorem~\ref{thm:main-result} to construct a diagonal preserving \stariso.  Note that if $\mu \in E^*$ such that $r( \mu )$ is a sink, then $\mu \in \partial E_\iso$.

\begin{lemma}\label{lem:adjusting-sinks}
Let $E$ be a graph and let $F$ be a subset of $E^{\geq 1}$ such that 
\begin{enumerate}
\item \label{eq1:adjusting-sinks} for all $\mu \in F$, $r(\mu) \in E^0_\sink$,

\item \label{eq2:adjusting-sinks} for all $\mu, \nu \in F$, $r( \mu ) = r( \nu )$ if and only if $\mu = \nu$, and 

\item \label{eq3:adjusting-sinks} $F$ is a closed subset of $\partial E$.
\end{enumerate}
Define $\ftn{\kappa}{\partial E}{\partial E}$ by 
\[
\kappa(x) = 
\begin{cases}
x &\text{if $x \notin F \cup r( F )$} \\
r(x) &\text{if $x \in F$} \\
\mu &\text{if $x = r( \mu )$ for some $\mu \in F$}.
\end{cases}
\]
Then $\kappa$ is an orbit equivalence.
\end{lemma}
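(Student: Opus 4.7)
\bigskip

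\noindent\emph{Proof plan.} My strategy is to exhibit $\kappa$ as a self-inverse homeomorphism and then verify the orbit equivalence conditions by a case analysis relative to the partition $\partial E = A \sqcup F \sqcup r(F)$, where $A = \partial E \setminus (F \cup r(F))$.

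First I would observe that both $F$ and $r(F)$ are clopen in $\partial E$. Since each $\mu \in F$ has $r(\mu) \in E^0_\sink$, one checks that $\mathcal{Z}_E(\mu) = \{\mu\}$, so the elements of $F$ are isolated; hence $F$ is open, and closed by hypothesis \eqref{eq3:adjusting-sinks}. The set $r(F) \subseteq E^0_\sink$ is clopen by Lemma~\ref{l:sinks-closed}. Note $F \cap r(F) = \emptyset$ since $F \subseteq E^{\geq 1}$ and $r(F) \subseteq E^0$. Condition \eqref{eq2:adjusting-sinks} ensures $\kappa$ is well-defined, and a direct check shows $\kappa \circ \kappa = \id_{\partial E}$: for $x \in F$, the unique $\mu \in F$ with $r(\mu) = r(x)$ is $\mu = x$, again by \eqref{eq2:adjusting-sinks}. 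Thus $\kappa^{-1} = \kappa$. Continuity follows by restricting to the three clopen pieces: on $A$ the map is the identity, while $F$ and $r(F)$ are clopen discrete subspaces on which every function is automatically continuous.

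Next I would construct the orbit equivalence data. Fix $x \in \partial E^{\geq 1}$; note $x \notin r(F)$ for length reasons. The nonempty subcases are (a) $x \in A$, $\sigma_E(x) \in A$; (b) $x \in A$, $\sigma_E(x) \in F$; (c) $x \in A$, $\sigma_E(x) \in r(F)$ (forcing $|x|=1$); (d) $x \in F$, $\sigma_E(x) \in A$ (which requires $|x|\geq 2$, since $\sigma_E(x) \in F$ is incompatible with \eqref{eq2:adjusting-sinks}); (e) $x \in F$, $\sigma_E(x) \in r(F)$ (forcing $|x|=1$). In each case values of $m(x), l(x)$ can be read off by exploiting that $r(x)$ is a sink whenever $x \in F$, so only $\sigma_E^0$ can be applied to $\kappa(x)$: in (a) take $m=0, l=1$; in (b), $m=0, l=|x|$; in (c), $l=1, m=|\mu|$ for the unique $\mu \in F$ with $r(\mu)=\sigma_E(x)$; in (d), $l=0, m=|x|-1$; in (e), $l=0, m=1$.

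The main obstacle is continuity of $m$ and $l$. I would handle it by observing that each of the five cases is a clopen subset of $\partial E^{\geq 1}$: the sets $A$, $F$, $r(F)$ are clopen, and $\sigma_E^{-1}$ of a clopen set is clopen. On each case the defined value is locally constant. Indeed, in case (b), the clopen decomposition $F = \bigsqcup_{\mu \in F} \{\mu\}$ pulls back to a clopen decomposition under $\sigma_E$, on each piece of which $|x| = |\mu|+1$ is constant; an analogous argument using the discrete clopen subspace $r(F)$ handles (c); and in case (d), each singleton $\{x\} \subseteq F$ is clopen, so $|x|-1$ is locally constant. Hence $m$ and $l$ are continuous. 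Since $\kappa^{-1} = \kappa$, the inverse condition \eqref{def:oe2} is the same equation with $y$ in place of $x$, so the same formulas give continuous $m' = m$ and $l' = l$, completing the verification that $\kappa$ is an orbit equivalence.
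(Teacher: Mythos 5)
Your proof is correct and follows essentially the same route as the paper's: the same clopen decomposition $\partial E = A\sqcup F\sqcup r(F)$ with $\kappa\circ\kappa=\id$ reducing the work to one direction, and your case-by-case values of $m$ and $l$ assemble into exactly the functions the paper writes down globally (with continuity argued the same way, via clopen pieces consisting of isolated points).
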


\begin{proof}
We must show that $\kappa$ is a homeomorphism and there exist continuous functions $\ftn{l,m}{\partial E^{\geq 1}}{\N_0}$ and $\ftn{l',m'}{\partial E^{\geq 1 }}{\N_0}$ such that 
\[
\sigma_E^{m(x)} ( \kappa( \sigma_E(x) ) ) = \sigma_E^{l(x)} ( \kappa(x) ) \quad \text{and} \quad \sigma_E^{m'(x)} ( \kappa^{-1}( \sigma_E(x) ) ) = \sigma_E^{l'(x)} ( \kappa^{-1}(x) )
\]
for all $x \in \partial E^{\geq 1}$.  A computation shows that $\kappa \circ \kappa = \mathrm{id}$.  Hence, it is enough to show that $\kappa$ is continuous and there exist $\ftn{l,m}{\partial E^{\geq 1}}{\N_0}$ such that 
\[
\sigma_E^{m(x)} ( \kappa( \sigma_E(x) ) ) = \sigma_E^{l(x)} ( \kappa(x) ) 
\]
for all $x \in \partial E^{\geq 1}$.

Since $r(F) \subseteq E^0_\sink$, by Lemma~\ref{l:sinks-closed}, $r(F)$ is a clopen subset of $\partial E$.  Since $F \subseteq \partial E_\iso$, $F$ is open in $\partial E$.  By Assumption~\eqref{eq3:adjusting-sinks}, $F$ is closed in $\partial E$.
As $F$, $r(F)$ and thereby also $\partial E\setminus\left(F\cup r(F)\right)$ are clopen, it suffices to check for continuity on the three sets individually.
Since $F$ and $r(F)$ consist of isolated points and thereby carry the discrete subspace topology, $\kappa$ is automatically continuous on $F$ and $r(F)$.
Since $\kappa$ restricts to the identity on $\partial E\setminus\left(F\cup r(F)\right)$, $\kappa$ is also continuous on $\partial E\setminus\left(F\cup r(F)\right)$.
So $\kappa$ is continuous and hence a homeomorphism.

We now produce continuous functions $\ftn{l, m}{\partial E^{\geq 1 }}{\N_0}$ such that 
\[
\sigma_E^{m(x)} ( \kappa( \sigma_E(x) ) ) = \sigma_E^{l(x)} ( \kappa(x) )
\]
for all $x \in \partial E^{\geq 1}$.  Note that $F \subseteq E^*$ (paths of finite length).  For each $v \in r(F)$, we will denote the unique element in $F$ with range $v$ by $\mu_v$.  Note that if $x \in \sigma_E^{-1}(F)$, then $|x| \geq 2$ since $\sigma_E(x) \in F$ and $F \subseteq \partial E^{\geq 1}$.

Define $\ftn{l, m}{\partial E^{\geq 1}}{\N_0}$ by 
\[
l(x) = 
\begin{cases}
0 &\text{if $x \in F$,} \\
| x | &\text{if $x \in \sigma_E^{-1}(F)$,} \\
1 &\text{otherwise,}%if $x \in  \partial E^{\geq 1} \setminus (F \cup\sigma_E^{-1}(F) )$,} \\
\end{cases}
\]
and
\[
m(x) = 
\begin{cases}
| \mu_{r(x)} | &\text{if $x \in E^1 r(F)$,} \\
|x| - 1 &\text{if $x \in F \cap \partial E^{\geq 2}$,} \\
0 &\text{otherwise.}
\end{cases}
\]

We first show that $l$ is continuous.  Since $F$ is clopen in $\partial E^{\geq 1}$ and $\sigma_E$ is continuous, we have that %$F \cup clopen
$\sigma_E^{-1}(F)$ is a clopen subset of $\partial E^{\geq 1}$.  
Since $F$ and $\sigma_E^{-1}(F)$ consist of isolated points, $l$ is automatically continuous on $F$ and $\sigma_E^{-1}(F)$.
As $l$ is constant on $\partial E^{\geq 1}\setminus\left(F\cup \sigma_E^{-1}(F)\right)$, $l$ is also continuous on $\partial E^{\geq 1}\setminus\left(F\cup \sigma_E^{-1}(F)\right)$ and thereby on $\partial E^{\geq 1}$.
%Therefore, $l$ is continuous if and only if $l \vert_{ F \cup \sigma_E^{-1}(F) }$ and $l \vert_{ \partial E^{\geq 1} \setminus \left( F \cup \sigma_E^{-1}(F) \right) }$ are continuous.  Since $F \cup \sigma_E^{-1}(F) \subseteq \partial E_\iso$, the subspace topology of $F \cup \sigma_E^{-1}(F)$ is the discrete topology.  Thus, $l \vert_{ F \cup \sigma_E^{-1}(F) }$ is continuous.  Since $l \vert_{ \partial E^{\geq 1} \setminus \left( F \cup \sigma_E^{-1}(F) \right) }$ is a constant function, it is continuous.  We now can conclude that $l$ is continuous. 

For continuity of $m$, we first note that $F\cap\partial E^{\geq 2}$ and $E^1r(F)$ are both clopen in $\partial E^{\geq 1}$ as $F$ and $\partial E^{\geq 2}$ are, and as $E^1r(F)=\sigma_E^{-1}(F)\cap E^1$ with $\partial E^{\geq 1}\cap E^1$ clopen in $\partial E^{\geq 1}$.
Since $E^1r(F)$ and $F\cap\partial E^{\geq 2}$ consist of isolated points, they carry the discrete subspace topology, so $m$ is continuous on $E^1r(F)$ and $F\cap\partial E^{\geq 2}$.
As $m$ is constant on the complement $\partial E^{\geq 1}\setminus (E^1r(F)\cup(F\cap\partial E^{\geq 2}))$ we conclude that $m$ is continuous.
%For continuity of $m$, note that for $n \geq 1$, we have that $m^{-1}(\{n\}) \subseteq \partial E_{\iso}$ since the range of every element of $m^{-1} (\{n\})$ is a sink.  Thus, $m^{-1}(\{n\})$ is an open subset of $\partial E$.  Let $y \in m^{-1}(\{0\})$.  If $y \in \partial E_{\iso}$, then $\{y\}$ is an open subset of $\partial E$ containing $y$ such that $\{y\} \subseteq m^{-1}(\{0\})$.  Now assume that $y \notin \partial E_{\iso}$.  So in particular, $y \notin F$ and $|y| \geq 1$.  Set $y = e_1 \cdots$.  Then $r(e_1)$ is not a sink since $y$ is not an isolated point in $\partial E$.    Since $F$ is closed, $\mathcal{Z} ( e_1 ) \setminus  F$ is an open subset of $\partial E$ containing $y$.  For all $x \in \mathcal{Z} ( e_1 ) \setminus   F$, $x \notin E^1r(F)$ since $r(e_1) \notin E^0_{\sink }$.  Hence, $\mathcal{Z} ( e_1 ) \setminus   F \subseteq m^{-1}(\{0\})$.   So, $m$ is continuous. 

Let $x \in \partial E^{\geq 1}$.  Suppose $x \in E^1r( F )$.  Then 
\[
\sigma_E^{m(x)} ( \kappa( \sigma_E (x) ) ) = \sigma_E^{| \mu_{r(x)} |}  ( \kappa(r(x)) )  = \sigma_E^{| \mu_{r(x)} |}  ( \mu_{r(x)} ) = r(x)
\]
and 
\begin{align*}
\sigma_E^{l(x)} ( \kappa(x) )  
&= 
\begin{cases}
\kappa( x ) &\text{if $x \in F$} \\
\sigma_E( \kappa( x )) &\text{if $x \in E^1r(F) \setminus F$} 
\end{cases} \\
&= r(x).
\end{align*}

Suppose $x \in F \cap \partial E^{\geq 2}$.  
Then $\sigma_E(x)\notin F$ since $r(\sigma_E(x))=r(x)$ with $x\in F$, and $\sigma_E(x)\notin r(F)$ as $\sigma_E(x)\in\partial E^{\geq 1}$, hence $\kappa(\sigma_E(x))=\sigma_E(x)$.
%Write $x = e_1 e_2 \cdots e_{|x|}$.  Note that by Assumption~\eqref{eq2:adjusting-sinks} $e_2\cdots e_{|x|} \notin F$ since $r(x) = r(e_2\cdots e_{|x|})$ and $x \neq e_2 \cdots e_{|x|}$.  
Thus 
\[
\sigma_E^{m(x)} ( \kappa( \sigma_E (x) ) ) =  \sigma_E^{|x| - 1}( \sigma_E(x) )  = r( x)
\]
and 
\[
\sigma_E^{l(x)} ( \kappa(x) ) = \sigma_E^0(r(x)) = r(x). 
\]

Suppose $x \notin E^1 r( F )$ and $x \notin F \cap \partial E^{\geq 2}$.  Then 
$x\notin F$ and $\sigma_E(x)\notin r(F)$, so
\begin{align*}
\sigma_E^{m(x)} ( \kappa( \sigma_E (x) ) ) &= \kappa( \sigma_E(x) ) \\
						&= \begin{cases}
r( \sigma_E(x) )		&\text{if $x \in \sigma_E^{-1} ( F )$} \\
\sigma_E(x)			&\text{if $x \notin \sigma_E^{-1}(F)$} 
\end{cases}  \\
&= \begin{cases}
r( x )		&\text{if $x \in \sigma_E^{-1} ( F )$} \\
\sigma_E(x)			&\text{if $x \notin \sigma_E^{-1}(F)$,} 
\end{cases}
\end{align*}
and $\kappa(x)=x$, hence
%Recall that if $x \in \sigma_E^{-1} ( F )$, then $x \in \partial E^{\geq 2}$.  Thus, if $x \in \sigma_E^{-1}(F)$, then $x \notin F$ since $x \notin F \cap \partial E^{\geq 2}$.  Also, if $|x| = 1$, then $x \notin F$ since $x \notin E^1 r(F)$.  Therefore,    
\begin{align*}
\sigma_E^{l(x)} ( \kappa (x) ) ) &= 
\begin{cases}
\sigma_E^{|x|} ( x)		&\text{if $x \in \sigma_E^{-1} ( F )$} \\
\sigma_E( x )		&\text{if $x \notin \sigma_E^{-1}(F)$} 
\end{cases} \\
&= \begin{cases}
r( x )		&\text{if $x \in \sigma_E^{-1} ( F )$} \\
\sigma_E(x)			&\text{if $x \notin \sigma_E^{-1}(F)$.} 
\end{cases}
\end{align*}
We have just shown that $\ftn{l, m}{\partial E^{\geq 1 }}{\N_0}$ are continuous functions and
\[
\sigma_E^{m(x)} ( \kappa( \sigma_E(x) ) ) = \sigma_E^{l(x)} ( \kappa(x) )
\]
for all $x \in \partial E^{\geq 1}$.  We conclude that $\kappa$ is an orbit equivalence.
\end{proof}

%%%%%%%%%%%%%%%%%%%%%%%%%%%%%%%%%%%%%%%%%%%%%%%%%%%%%%%%%%
%%% SECTION: The Extended Weyl groupoid %%%%%%%%%%%%%%%%%%%%%%%%%%%%%%%%
%%%%%%%%%%%%%%%%%%%%%%%%%%%%%%%%%%%%%%%%%%%%%%%%%%%%%%%%%%

\section{The extended Weyl groupoid}

In \cite{arXiv:1410.2308}, the authors prove that a diagonal preserving \stariso between $C^*(E)$ and $C^*(F)$ implies that $E$ and $F$ are orbit equivalent.  In this section, we point out that their arguments even prove that the existence of a diagonal preserving \stariso between $C^*(E)$ and $C^*(F)$ implies the existence of an orbit equivalence between $E$ and $F$ preserving periodic points.  The arguments are actually contained in \cite[Section~4]{arXiv:1410.2308}.  For the convenience of the reader, we provide the arguments here.

First we need to recall the extended Weyl groupoid of $( C^*(E), \mathcal{D} (E) )$ as defined in \cite[Section~4]{arXiv:1410.2308}.

\begin{definition}
Let $E$ be a graph.  The \emph{normalizer} of $\mathcal{D} (E)$ is defined to be the set 
\[
 N( \mathcal{D} (E) ) = \setof{ n \in C^* (E) }{ \text{$ndn^* , n^* d n \in \mathcal{D}(E)$ for all $d \in \mathcal{D}(E)$}}. 
\]
\end{definition}

By \cite[Lemma~4.6]{MR2460017}, for all $n \in N( \mathcal{D} (E) )$, $nn^*$ and $n^*n$ are elements in $\mathcal{D}(E)$.  Therefore, we may define for $n \in N( \mathcal{D} (E)$, the sets 
\[
\mathrm{dom}(n) = \setof{ x \in \partial E }{ h_E (x) ( n^* n ) > 0 } 
\]
and 
\[
\mathrm{ran}(n) = \setof{ x \in \partial E }{ h_E (x) ( n n^* ) > 0 }. 
\]
By \cite[Proposition~4.7]{MR2460017}, for each $n \in N( \mathcal{D} (E) )$, there exists a unique homeomorphism $\ftn{\alpha_n}{\mathrm{dom}(n)}{\mathrm{ran}(n)}$ such that for all $d \in \mathcal{D}(E)$,
\[
h_E ( x ) ( n^* d n ) = h_E ( \alpha_n (x) ) (d) h_E (x)(n^*n).
\]

For each $x \in \partial E_\iso$, we let $p_x$ denote the unique element in $\mathcal{D} (E)$ satisfying $h_E (y) (p_x ) = 1$ if $y = x$ and zero otherwise, i.e., $p_x$ is the unique element in $\mathcal{D} (E)$ corresponding to the characteristic function $\chi_{ \{x\} } \in C_0( \partial E )$ under the canonical \starisos $\mathcal{D} (E) \cong C_0 ( \mathrm{Spec} ( \mathcal{D} (E) ) ) \cong C_0 ( \partial E )$.
By \cite[Lemma~4.3]{arXiv:1410.2308}, if $x \in \partial E_\iso$, then $p_x \mathcal{D} (E) p_x$ is either isomorphic to $\C$ (when $x$ is not eventually periodic) or $C( \mathbb{T})$ (when $x$ is eventually periodic).
%Recall that for each $x \in \partial E_\iso$, $p_x$ is the unique element in $\mathcal{D} (E)$ satisfying $h_E (y) (p_x ) = 1$ if $y = x$ and zero otherwise. 

 By \cite[Lemma~4.4]{arXiv:1410.2308}, for each $x \in \partial E_\iso$, $n_1, n_2 \in N( \mathcal{D} (E) )$ such that $x \in \mathrm{dom}(n_1) \cap \mathrm{dom}(n_2)$ and $\alpha_{n_1} (x) = \alpha_{n_2}(x)$,
\[
U_{(x, n_1, n_2 )} = \left( h_E(x)( n_1^* n_1 n_2^* n_2) \right)^{-1/2}p_x n_1^*n_2 p_x
\]
is a unitary in $p_x C^*(E) p_x$.  

We define an equivalence relation $\sim$ on $\setof{ (n, x ) }{ n \in N( \mathcal{D} (E) ), x \in \mathrm{dom}(n) }$ by $(n_1, x_1) \sim (n_2, x_2)$ if either 
\begin{enumerate}
\item $x_1 = x_2 \in \partial E_{\iso}$, $\alpha_{n_1}(x_1) = \alpha_{n_2}( x_2 )$, and $[ U_{( x_1, n_1, n_2 ) } ] = 0$ in $K_1 ( p_{x_1}C^*(E)p_{x_1} )$

\item $x_1 = x_2 \notin \partial E_{\iso}$ and there is an open set $V$ such that $x_1 \in V \subseteq \mathrm{dom} ( n_1) \cap \mathrm{dom} ( n_2)$ and $\alpha_{n_1} (y) = \alpha_{n_2} (y)$ for all $y \in V$.  
\end{enumerate}
It is shown in \cite[Proposition~4.6]{arXiv:1410.2308} that this relation is in fact an equivalence relation. 

Let $\mathcal{G}_{( C^*(E), \mathcal{D}(E) ) }$ be the collection of equivalence classes.  Define a partially defined product by 
\[
[ ( n_1 , x_1 )] \cdot [(n_2, x_2) ] = [ ( n_1 n_2, x_2 ) ] \quad \text{if $\alpha_{n_2}( x_2) = x_1$} 
\]
and undefined otherwise, define an inverse map by 
\[
[ ( n, x )  ]^{-1} = [ ( n^* , \alpha_n (x) ) ].
\] 
By \cite[Proposition~4.7 and Proposition~4.8]{arXiv:1410.2308}, $\mathcal{G}_{( C^*(E), \mathcal{D}(E) ) }$ is a groupoid and is a topological groupoid with the topology generated by 
\[
\setof{ \setof{[ (n,x) ] }{x \in \mathrm{dom}(n) } }{ n \in N( \mathcal{D}(E) ) }. 
\]
Moreover, by \cite[Proposition~4.8 and its proof]{arXiv:1410.2308}, the map $\phi_E$ from $\mathcal{G}_E$ to $\mathcal{G}_{ ( C^*(E), \mathcal{D}(E) ) }$ defined by 
\[
\phi_E ( ( x, k, y ) ) = [ ( s_\mu s_\nu^* , y ) ]
\]
where $x = \mu z$, $y = \nu z$, $k = |\mu | - | \nu |$ for some $\mu, \nu \in E^*$ and $z \in \partial E$, and $\setof{ p_v, s_e }{ v \in E^0, e \in E^1}$ be a Cuntz-Krieger $E$-family generating $C^*(E)$,  is an isomorphism.

\begin{proposition}[{cf.~\cite[Proposition~4.11]{arXiv:1410.2308}}]\label{prop:diagonalorbit}
Let $E$ and $F$ be graphs.  Suppose there exists a diagonal preserving \stariso $\ftn{\Psi}{C^* (E)}{C^* (F)}$. % such that $\Psi ( \mathcal{D} (E) ) = \mathcal{D} (F)$.  
Then there exists an isomorphism $\ftn{\beta}{\mathcal{G}_E}{\mathcal{G}_F}$ and a homeomorphism $\ftn{\kappa}{\partial E}{\partial F}$ such that $\beta ( ( \mu, 0 , \mu ) ) = ( \kappa ( \mu ), 0 , \kappa(\mu) )$ and for all $x \in \partial E_{\iso}$, $x$ is eventually periodic if and only if $\kappa (x)$ is eventually periodic.
\end{proposition}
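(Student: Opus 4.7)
The plan is to transport $\Psi$ through the identifications $\mathcal{G}_E\cong\mathcal{G}_{(C^*(E),\mathcal{D}(E))}$ and $\mathcal{G}_F\cong\mathcal{G}_{(C^*(F),\mathcal{D}(F))}$ provided by $\phi_E$ and $\phi_F$, following the strategy of \cite[Proposition~4.11]{arXiv:1410.2308}, and then read off the extra information about isolated eventually periodic points.  Because $\Psi$ preserves the diagonal, it restricts to a \stariso $\mathcal{D}(E)\to\mathcal{D}(F)$; dualizing and using Theorem~\ref{thm:prim-diagonal}, this produces a homeomorphism $\ftn{\kappa}{\partial E}{\partial F}$ characterized by $h_F(\kappa(x))(\Psi(d))=h_E(x)(d)$ for all $d\in\mathcal{D}(E)$ and $x\in\partial E$.

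Second, I would note that $\Psi$ sends $N(\mathcal{D}(E))$ bijectively onto $N(\mathcal{D}(F))$, and that a direct computation from the defining property of $\alpha_n$ yields $\mathrm{dom}(\Psi(n))=\kappa(\mathrm{dom}(n))$, $\mathrm{ran}(\Psi(n))=\kappa(\mathrm{ran}(n))$, and $\alpha_{\Psi(n)}\circ\kappa=\kappa\circ\alpha_n$ on $\mathrm{dom}(n)$. This lets me define
\[
\ftn{\widetilde{\Psi}}{\mathcal{G}_{(C^*(E),\mathcal{D}(E))}}{\mathcal{G}_{(C^*(F),\mathcal{D}(F))}},\qquad [(n,x)]\mapsto[(\Psi(n),\kappa(x))].
\]
The critical step is to verify that $\widetilde{\Psi}$ is well defined on equivalence classes. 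For $x\notin\partial E_\iso$ this is immediate from the equivariance just recorded. For $x\in\partial E_\iso$, the projection $p_x\in\mathcal{D}(E)$ is minimal with $\Psi(p_x)=p_{\kappa(x)}$, so $\Psi$ restricts to a \stariso $\ftn{\Psi_x}{p_xC^*(E)p_x}{p_{\kappa(x)}C^*(F)p_{\kappa(x)}}$; the closed formula for $U_{(x,n_1,n_2)}$ is visibly equivariant under $\Psi_x$, so $\Psi_x(U_{(x,n_1,n_2)})=U_{(\kappa(x),\Psi(n_1),\Psi(n_2))}$, and since $(\Psi_x)_*$ is a $K_1$-isomorphism the vanishing condition in the relation $\sim$ is preserved. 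Applying the same argument to $\Psi^{-1}$ gives an inverse, and continuity of $\widetilde{\Psi}$ in both directions follows from the description of the topology by sets $\setof{[(n,x)]}{x\in\mathrm{dom}(n)}$, which $\Psi$ matches up.

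Third, I would set $\beta=\phi_F^{-1}\circ\widetilde{\Psi}\circ\phi_E$. Tracing through the formula for $\phi_E$ on the unit space, $\phi_E((\mu,0,\mu))$ is the trivial germ at $\mu$ and is sent by $\widetilde{\Psi}$ to the trivial germ at $\kappa(\mu)$; applying $\phi_F^{-1}$ yields $\beta((\mu,0,\mu))=(\kappa(\mu),0,\kappa(\mu))$ as required. Finally, for the statement about eventually periodic points, observe that for $x\in\partial E_\iso$ the restriction $\Psi_x$ is a \stariso of corners $p_xC^*(E)p_x\cong p_{\kappa(x)}C^*(F)p_{\kappa(x)}$, and by \cite[Lemma~4.3]{arXiv:1410.2308} each such corner is $\cong\C$ when the distinguished point is not eventually periodic and $\cong C(\mathbb{T})$ when it is. Since these two \csa s have different $K_1$ groups, $x$ is eventually periodic if and only if $\kappa(x)$ is.  I expect the main obstacle to be the well-definedness of $\widetilde{\Psi}$ at isolated points, where the $K_1$ obstruction built into the equivalence relation $\sim$ must be shown to be $\Psi$-equivariant; this is also the very step through which the additional claim about eventually periodic isolated points, going beyond \cite[Proposition~4.11]{arXiv:1410.2308}, naturally enters the argument.
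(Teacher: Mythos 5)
Your proposal is correct and follows essentially the same route as the paper: obtain $\kappa$ from $\Psi\vert_{\mathcal{D}(E)}$ via Theorem~\ref{thm:prim-diagonal}, transport $\Psi$ to the isomorphism $[(n,x)]\mapsto[(\Psi(n),\kappa(x))]$ of extended Weyl groupoids, conjugate by $\phi_E$ and $\phi_F$ to get $\beta$, and detect eventual periodicity of isolated points through the corner at $p_x$ (using $\Psi(p_x)=p_{\kappa(x)}$). The only divergence is cosmetic: you distinguish the corners $p_xC^*(E)p_x$ by their $K_1$-groups, whereas the paper's text invokes the dichotomy $\C$ versus $C(\mathbb{T})$ for $p_x\mathcal{D}(E)p_x$ --- and since the corner of the commutative algebra $\mathcal{D}(E)$ at a minimal projection is always $\C$, your formulation with the full corner is the one that actually matches \cite[Lemma~4.3]{arXiv:1410.2308}.
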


\begin{proof}
Let $\setof{ p_v, s_e }{ v \in E^0, e \in E^1}$ be a Cuntz-Krieger $E$-family generating $C^*(E)$.  Since $\ftn{\Psi}{C^* (E)}{C^* (F)}$ is a \stariso such that $\Psi ( \mathcal{D} (E) ) = \mathcal{D} (F)$, there exists a homeomorphism $\ftn{\kappa}{\partial E }{\partial F}$ such that $h_E (x) ( f) = h_F ( \kappa (x) ) \Psi(f)$ for all $f \in \mathcal{D}(E)$ and the map $\ftn{\lambda}{\mathcal{G}_{ ( C^*(E), \mathcal{D}(E) ) }}{\mathcal{G}_{ ( C^* (F) , \mathcal{D}(E) ) }}$ given by 
\[
\lambda ( [( n, x ) ] ) = [ ( \Psi(x) , \kappa (x) ) ]
\]
is an isomorphism.  Now, $\ftn{\beta = \phi_F^{-1} \circ \lambda \circ \phi_E}{\mathcal{G}_E}{\mathcal{G}_F}$ is an isomorphism. 

We claim that $\beta ( ( \mu, 0 , \mu ) ) = ( ( \kappa (\mu ), 0 , \kappa ( \mu ) )$ for all $\mu \in \partial E$.  Let $\mu \in \partial E$.  Then 
\[
\phi_F (\beta ( ( \mu, 0 , \mu ) ) ) = \lambda ( [ ( s_\mu s_\mu^* , \mu ) ] ) = [ ( \Psi ( s_\mu s_\mu^*) , \kappa ( \mu ) ) ].
\]
Since $\phi_F$ is an isomorphism, there exists $( \nu, 0 , \nu ) \in G_F^{(0)}$ such that $\phi_F ( ( \nu, 0, \nu ) ) = [ ( \Psi ( s_\mu s_\mu^* ) , \kappa ( \mu ) ) ]$.  Since $\phi_F ( ( \nu, 0, \nu ) ) = [ ( s_\nu s_\nu^* , \nu ) ]$, we have that $\nu = \kappa( \mu )$.  Hence, $\phi_F^{-1} ( [ ( \Psi ( s_\mu s_\mu^* ) , \kappa ( \mu ) ) ] ) = ( \kappa( \mu ), 0 , \kappa ( \mu ) )$, thus proving the claim.

We will now show that for all $x \in \partial E_{\iso }$, $x$ is eventually periodic if and only if $\kappa(x)$ is eventually periodic.  To do this, we first show that for all $x \in \partial E_{\iso}$, then $\Psi ( p_x ) = p_{ \kappa (x) }$.  Let $x \in \partial E_{\iso}$.  Suppose $y \in \partial F$.  Then 
\begin{align*}
h_F (y) ( \Psi ( p_x) ) &= h_E ( \kappa^{-1}(y) ) (p_x)  \\
				&= 
				\begin{cases}
				1 &\text{if $\kappa^{-1}( y) = x$} \\
				0 &\text{otherwise}
				\end{cases} \\
				&= 
				\begin{cases}
				1 &\text{if $\kappa( x) = y$} \\
				0 &\text{otherwise}
				\end{cases} \\
				&= h_F ( y) ( p_{\kappa(x)} ).
\end{align*}
Therefore, by the uniqueness of $p_{\kappa(x)}$, we have that $\psi( p_x ) = p_{\kappa(x)}$.  Hence, proving the claim.  

Let $x \in \partial E_{\iso}$.  Then 
\[
p_{\kappa(x) } \mathcal{D}(F) p_{\kappa(x)} = \Psi ( p_x \mathcal{D} (E) p_x ),
\]
and hence $p_{\kappa(x) } \mathcal{D}(F) p_{\kappa(x)} \cong p_x \mathcal{D} (E) p_x $.  Therefore, $x$ is eventually periodic if and only if $p_x \mathcal{D}(E) p_x \cong C(\mathbb{T})$ if and only if $p_{\kappa(x) } \mathcal{D}(F) p_{\kappa(x)} \cong C( \mathbb{T} )$ if and only if $\kappa (x)$ is eventually periodic.
\end{proof}

\begin{theorem}\label{thm:diagonalorbit}
Let $E$ and $F$ be graphs.  Suppose there exists a diagonal preserving \stariso $\ftn{\Psi}{C^*(E)}{C^*(F)}$. % such that $\Psi ( \mathcal{D}(E) ) = \mathcal{D}(F)$.  
Then there exists an orbit equivalence between $E$ and $F$ preserving periodic points.
\end{theorem}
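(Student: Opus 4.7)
The proof will essentially be a short corollary of Proposition~\ref{prop:diagonalorbit} together with Proposition~\ref{prop:arXiv:1410.2308:Prop3.4}. The plan is to extract from $\Psi$ the groupoid isomorphism $\beta\colon\mathcal{G}_E\to\mathcal{G}_F$ and the homeomorphism $\kappa\colon\partial E\to\partial F$ provided by Proposition~\ref{prop:diagonalorbit}, observe that $\kappa$ automatically preserves eventually periodic isolated points, and then verify that $\kappa$ is an orbit equivalence by checking the pseudogroup condition $\kappa\circ\mathcal{P}_E\circ\kappa^{-1}=\mathcal{P}_F$.

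First I would apply Proposition~\ref{prop:diagonalorbit} to produce $\beta$ and $\kappa$ with the stated compatibility $\beta((\mu,0,\mu))=(\kappa(\mu),0,\kappa(\mu))$ and with the property that $x\in\partial E_\iso$ is eventually periodic if and only if $\kappa(x)$ is eventually periodic. Condition~(3) in Definition~\ref{d:orbitequivalence} is therefore immediate. It remains to show that $\kappa$ satisfies conditions~(1) and~(2) of Definition~\ref{d:orbitequivalence}, and by Proposition~\ref{prop:arXiv:1410.2308:Prop3.4} this reduces to establishing $\kappa\circ\mathcal{P}_E\circ\kappa^{-1}=\mathcal{P}_F$.

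The key observation, and essentially the only real calculation, is that any isomorphism of étale groupoids intertwines the associated pseudogroups through its restriction to the unit space. Concretely, let $A$ be an open bisection of $\mathcal{G}_E$; since $\beta$ is both a groupoid isomorphism and a homeomorphism, $\beta(A)$ is an open bisection of $\mathcal{G}_F$. Using $\beta((x,0,x))=(\kappa(x),0,\kappa(x))$, we have $r(\beta(\gamma))=\kappa(r(\gamma))$ and $s(\beta(\gamma))=\kappa(s(\gamma))$ for every $\gamma\in A$, so $s(\beta(A))=\kappa(s(A))$, $r(\beta(A))=\kappa(r(A))$, and
\[
\alpha_{\beta(A)}(\kappa(s(\gamma)))=\kappa(r(\gamma))=\kappa(\alpha_A(s(\gamma))),
\]
which gives $\alpha_{\beta(A)}=\kappa\circ\alpha_A\circ\kappa^{-1}$. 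Hence $\kappa\circ\mathcal{P}_E\circ\kappa^{-1}\subseteq\mathcal{P}_F$; applying the same argument to $\beta^{-1}$ yields the reverse inclusion.

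Proposition~\ref{prop:arXiv:1410.2308:Prop3.4} then promotes this pseudogroup equality to the statement that $\kappa$ is an orbit equivalence, and combined with the preservation of eventually periodic isolated points obtained in the first step this produces the desired orbit equivalence preserving periodic points. The main (and essentially only) obstacle is the pseudogroup verification, but this is a routine consequence of the definition of $\mathcal{P}(\mathcal{G})$ as the pseudogroup of partial homeomorphisms arising from open bisections, so no serious difficulties are expected.
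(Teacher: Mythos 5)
Your proposal follows exactly the paper's own proof: invoke Proposition~\ref{prop:diagonalorbit} to get $\beta$ and $\kappa$, verify $\kappa\circ\mathcal{P}_E\circ\kappa^{-1}=\mathcal{P}_F$ (which the paper leaves as ``one can check'' and you correctly fill in via the open-bisection argument), and conclude with Proposition~\ref{prop:arXiv:1410.2308:Prop3.4}. The argument is correct and matches the paper's route.
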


\begin{proof}
By Proposition~\ref{prop:diagonalorbit}, there exist an isomorphism $\ftn{\beta}{\mathcal{G}_E}{\mathcal{G}_F}$ and a homeomorphism $\ftn{\kappa}{\partial E}{\partial F}$ such that $\beta ( ( \mu, 0 , \mu ) ) = ( \kappa ( \mu ), 0 , \kappa(\mu) )$ and for all $x \in \partial E_{\iso}$, $x$ is eventually periodic if and only if $\kappa (x)$ is eventually periodic.  One can check that $\kappa \circ \mathcal{P}_E \circ \kappa^{-1} = \mathcal{P}_F$.  By Proposition~\ref{prop:arXiv:1410.2308:Prop3.4}, $\kappa$ is an orbit equivalence between $E$ and $F$ preserving periodic points.
\end{proof}

\section{Main result}

Let $E$ be a graph and let $S$ be a subset of $E^0_\sink$.  Define $E_{\curlywedge, S}$ to be the graph with vertices $E_{\curlywedge,S}^0 = E^0$ and edges
\[
E_{\curlywedge, S}^1 = E^1 \sqcup \setof{ e(v) }{ v \in S }
\]
where the range and source maps extends the range and source maps of $E$ respectively, and $r_{E_{\curlywedge, S} }( e(v) ) = s_{E_{\curlywedge, S} } ( e(v) ) = v$ for all $v \in S$.

\begin{proposition}\label{prop:iso-unplug-to-plug}
Let $E$ and $F$ be graphs, $S_1$ be a nonempty subset of $E^0_\sink$, $S_2$ be a nonempty subset of $F^0_\sink$.  Suppose there exist a bijection $\ftn{w}{ S_1 }{ S_2 }$ and a diagonal preserving \stariso $\ftn{\Phi}{ C^*(E) }{ C^*(F) }$ such that %$\Phi ( \mathcal{D} ( E ) ) = \mathcal{D} ( F )$ and 
$\Phi ( P_v ) = Q_{w(v)}$ for all $v \in S_1$, where $\setof{P_v, S_e}{ v \in E^0 , e \in E^1}$ is a Cuntz-Krieger $E$-family generating $C^*( E)$ and $\setof{Q_v, T_e}{ v \in F^0, e \in F }$ is a Cuntz-Krieger $F$-family generating $C^*( F )$.  Then there exists a diagonal preserving \stariso $\ftn{\Psi}{C^*(E_{\curlywedge, S_1}) }{ C^*( F_{\curlywedge, S_2} ) }$. %such that $\Psi ( \mathcal{D} ( E_{\curlywedge, S_1} ) ) = \mathcal{D} ( F_{\curlywedge, S_2} )$.
\end{proposition}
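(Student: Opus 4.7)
The plan is to construct $\Psi$ via the universal property of $C^{*}(E_{\curlywedge, S_{1}})$ by producing a Cuntz--Krieger $E_{\curlywedge, S_{1}}$-family inside $C^{*}(F_{\curlywedge, S_{2}})$. Let $\{\tilde{P}_{v}, \tilde{S}_{e}\}$ and $\{\tilde{Q}_{v}, \tilde{T}_{e}\}$ denote generating CK-families of $C^{*}(E_{\curlywedge, S_{1}})$ and $C^{*}(F_{\curlywedge, S_{2}})$. The first observation I will record is that for each $v \in S_{1}$, the vertex $v$ becomes regular in $E_{\curlywedge, S_{1}}$ with $e(v)$ as its unique outgoing edge, so CK2 together with CK4 force $\tilde{S}_{e(v)}^{*}\tilde{S}_{e(v)} = \tilde{P}_{v} = \tilde{S}_{e(v)}\tilde{S}_{e(v)}^{*}$, making $\tilde{S}_{e(v)}$ a unitary in the corner $\tilde{P}_{v} C^{*}(E_{\curlywedge, S_{1}}) \tilde{P}_{v}$; the analogous statement holds for $\tilde{T}_{e(w(v))}$. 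Since a sink of $E$ emits nothing but its new loop in $E_{\curlywedge, S_{1}}$, every path in $E^{*}_{\curlywedge, S_{1}}$ has the form $\mu e(v)^{n}$ with $\mu \in E^{*}$ and $n \geq 0$, and unitarity gives $\tilde{S}_{\mu e(v)^{n}}\tilde{S}_{\mu e(v)^{n}}^{*} = \tilde{S}_{\mu}\tilde{S}_{\mu}^{*}$. Consequently $\mathcal{D}(E_{\curlywedge, S_{1}}) = \overline{\operatorname{span}}\{\tilde{S}_{\mu}\tilde{S}_{\mu}^{*} : \mu \in E^{*}\}$, with the analogous description for $F_{\curlywedge, S_{2}}$.

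Next I will observe that $\{\tilde{Q}_{v}\}_{v \in F^{0}} \cup \{\tilde{T}_{e}\}_{e \in F^{1}}$ is a CK $F$-family inside $C^{*}(F_{\curlywedge, S_{2}})$, since the CK4 condition concerns vertices in $F^{0}_{\reg}$ and these are disjoint from $S_{2}$, so their outgoing edges in $F$ and in $F_{\curlywedge, S_{2}}$ coincide. Universality yields a $*$-homomorphism $\iota_{F} \colon C^{*}(F) \to C^{*}(F_{\curlywedge, S_{2}})$. I then set
\[
R_{v} := \iota_{F}(\Phi(P_{v})), \quad R_{e} := \iota_{F}(\Phi(S_{e})), \quad R_{e(v)} := \tilde{T}_{e(w(v))}
\]
for $v \in E^{0}$, $e \in E^{1}$, and $v \in S_{1}$ respectively. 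The CK relations for this $E_{\curlywedge, S_{1}}$-family split into cases: those internal to the $E^{1}$-indexed elements follow from applying $\iota_{F} \circ \Phi$ to the CK relations of $C^{*}(E)$; those internal to the new loops follow from the unitarity of $\tilde{T}_{e(w(v))}$ on $\tilde{Q}_{w(v)}$; and the identity $R_{v} = \tilde{Q}_{w(v)}$ for $v \in S_{1}$, which is precisely the hypothesis $\Phi(P_{v}) = Q_{w(v)}$ lifted through $\iota_{F}$, bridges the two at CK2, CK3, and at the loop case of CK4. The cross CK1 relations $R_{e}^{*} R_{e(v)} = 0$ for $e \in E^{1}$ reduce to $R_{e}^{*} R_{v} = 0$, which holds since $s(e)$ is not a sink of $E$ and so $s(e) \neq v$. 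Universality then produces $\Psi \colon C^{*}(E_{\curlywedge, S_{1}}) \to C^{*}(F_{\curlywedge, S_{2}})$.

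To see that $\Psi$ is a $*$-isomorphism, I will apply the symmetric construction to $\Phi^{-1}$ (still diagonal preserving, with $\Phi^{-1}(Q_{w(v)}) = P_{v}$) and $w^{-1}$ to produce $\Psi' \colon C^{*}(F_{\curlywedge, S_{2}}) \to C^{*}(E_{\curlywedge, S_{1}})$; the compositions $\Psi' \circ \Psi$ and $\Psi \circ \Psi'$ act as the identity on the respective generating CK-families, hence everywhere. Diagonal preservation then follows from the description of $\mathcal{D}(E_{\curlywedge, S_{1}})$ in the first paragraph: $\Psi(\tilde{S}_{\mu}\tilde{S}_{\mu}^{*}) = \iota_{F}(\Phi(S_{\mu}S_{\mu}^{*}))$ for $\mu \in E^{*}$, and since $\Phi(\mathcal{D}(E)) = \mathcal{D}(F)$, the image spans exactly $\iota_{F}(\mathcal{D}(F)) = \mathcal{D}(F_{\curlywedge, S_{2}})$. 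The main friction point I anticipate is the case analysis of CK relations bridging the old edges and the new loop edges; this is also the place where the assumption that $\Phi$ specifically matches the sinks in $S_{1}$ to those in $S_{2}$, rather than to arbitrary isolated points of $\partial F$, turns out to be essential.
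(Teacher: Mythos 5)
Your construction of the Cuntz--Krieger $E_{\curlywedge,S_1}$-family $\{R_v,R_e,R_{e(v)}\}$ inside $C^*(F_{\curlywedge,S_2})$ is exactly the family the paper builds (the paper's $\overline p_v,\overline s_e$ coincide with your $\iota_F(\Phi(P_v))$, $\iota_F(\Phi(S_e))$, $t_{e(w(v))}$), your case analysis of the CK relations is correct, and your treatment of the diagonal via $s_{\mu e(v)^n}s_{\mu e(v)^n}^*=s_\mu s_\mu^*$ is the same as the paper's. Where you genuinely diverge is in proving that $\Psi$ is bijective. The paper first shows that the inclusions $C^*(E)\to C^*(E_{\curlywedge,S_1})$ and $C^*(F)\to C^*(F_{\curlywedge,S_2})$ are injective, and then establishes surjectivity of $\Psi$ by exhibiting each generator $q_v$, $t_e$, $t_{e(z)}$ in the image and injectivity by invoking Szyma\'nski's general Cuntz--Krieger uniqueness theorem (checking that every vertex-simple cycle with no exits in $E_{\curlywedge,S_1}$ --- either an old exitless cycle of $E$ or a new loop $e(v)$ --- maps to a partial unitary with full spectrum). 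You instead run the whole construction symmetrically for $\Phi^{-1}$ and $w^{-1}$ to get $\Psi'$, and check that $\Psi'\circ\Psi$ and $\Psi\circ\Psi'$ fix the generating families; the only point worth spelling out there is the intermediate identity $\Psi'\circ\iota_F=\iota_E\circ\Phi^{-1}$ (verified on generators of $C^*(F)$), which is what lets you compute $\Psi'(\Psi(\tilde P_v))=\iota_E(\Phi^{-1}(\Phi(P_v)))=\tilde P_v$. Your route buys you independence from the uniqueness theorem and from injectivity of the inclusion maps (you never need to invert $\iota_E$, whereas the paper must invert its $\Phi_1$ to define $\widetilde\Phi$ on the image), at the cost of a second round of relation-checking; the paper's route avoids that duplication but leans on an external uniqueness result. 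Both are complete proofs.
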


\begin{proof}
Let $\setof{p_v, s_e}{ v \in E^0_{\curlywedge, S_1}, e \in E^1_{\curlywedge, S_1} }$ be a Cuntz-Krieger $E_{\curlywedge, S_1}$-family generating $C^*( E_{\curlywedge, S_1} )$ and $\setof{q_v, t_e}{ v \in F^0_{\curlywedge, S_2}, e \in F^0_{\curlywedge, S_2} }$ be a Cuntz-Krieger $F_{\curlywedge, S_2}$-family generating $C^*( F_{\curlywedge, S_2} )$.  Clearly, %A computation shows that 
\[
\setof{ p_v, s_e }{ v \in E^0, e \in E^1 }
\]
is a Cuntz-Krieger $E$-family in $C^*( E_{\curlywedge, S_1} )$.   Therefore, there exists a \starhomo $\ftn{\Phi_1}{ C^*( E) }{ C^*( E_{\curlywedge, S_1} ) }$ such that $\Phi_1 ( P_v ) = p_v$ and $\Phi_1 ( S_e ) = s_e$ for all $v \in E^0$ and $e \in E^1$.  For all $\mu \in E^*$, $\Phi_1 ( S_\mu ) = s_\mu$.  Hence, for $\mu$ a vertex-simple cycle in $E$ with no exits, $\mu$ is a vertex-simple cycle in $E_{\curlywedge, S_1}$ with no exits, and so $\Phi_1 ( S_\mu ) = s_\mu$ is a partial unitary with spectrum equal to $\mathbb{T}$.  And since $\Phi_1 (P_v ) \neq 0$ for all $v \in E^0$, by \cite[Theorem~1.2]{MR1914564}, we have that $\Phi_1$ is injective.  Similarly, there exists an injective \starhomo $\ftn{\Phi_2}{ C^*( F) }{ C^*( F_{\curlywedge, S_2} ) }$ such that $\Phi_2 ( Q_v ) = q_v$ and $\Phi_2 ( T_e ) = t_e$ for all $v \in F^0$ and $e \in F^1$.

Set $\mathfrak A = \Phi_1 ( C^*(E))$ and $\mathfrak B = \Phi_2 ( C^*(F) )$.  Note that $\mathfrak A$ is the $C^*$-subalgebra of $C^*( E_{\curlywedge, S_1} )$ generated by $\setof{ p_v, s_e }{ v \in E^0, e \in E^1 }$ and $\mathfrak B$ is the $C^*$-subalgebra of $C^*( F_{\curlywedge, S_1} )$ generated by $\setof{ q_v, t_e }{ v \in F^0, e \in F^1 }$.  Moreover, 
\[
\Phi_1 ( \mathcal{D} (E) ) = \overline{\mathrm{span}} \setof{ s_\mu s_\mu^* }{ \mu \in E^*}
\]
which we denote by $\mathcal{D}(\mathfrak A)$ and 
\[
\Phi_2 ( \mathcal{D} (F) ) = \overline{\mathrm{span}} \setof{ t_\mu t_\mu^* }{ \mu \in F^*}
\] 
which we denote by $\mathcal{D}(\mathfrak B)$.  Therefore, $\Phi$ induces a \stariso $\ftn{ \widetilde{\Phi}}{\mathfrak A}{\mathfrak B}$ such that $\widetilde{\Phi} ( \mathcal{D} (\mathfrak A) ) = \mathcal{D} (\mathfrak B )$ and $\widetilde{\Phi} ( p_v ) = q_{w(v)}$ for all $v \in S_1$.

For $v \in E^0_{\curlywedge,S_1}$, set $\overline{p}_v = \widetilde{\Phi} ( p_v )$ and for $e \in E^1_{\curlywedge, S_1}$, set
\[
\overline{ s}_e = 
\begin{cases}
\widetilde{\Phi} ( s_e ) &\text{if $e \in E^1$} \\
t_{e( w(v) )} &\text{if $e = e(v)$ for some $v \in S_1$}. 
\end{cases}
\]
One can check that $\setof{ \overline{p}_v , \overline{s}_e }{ v \in E^0_{\curlywedge, S_1} , e \in E^1_{\curlywedge, S_1} }$ is a Cuntz-Krieger $E_{\curlywedge, S_1}$-family in $C^*( F_{\curlywedge, S_2 })$.  Hence, there exists a \starhomo $\ftn{\Psi}{ C^*( E_{\curlywedge, S_1} ) }{ C^* ( F_{\curlywedge, S_2} ) }$ such that $\Psi ( p_v ) = \overline{ p}_v$ and $\Psi ( s_e ) = \overline{s}_e$ for all $v \in E^0_{\curlywedge, S_1}$ and $e \in E^1_{\curlywedge, S_1}$.  We claim that $\Psi$ is a \stariso.

First we show that $\Psi$ is surjective.  Let $w \in F^0_{\curlywedge, S_2}$.  Since $F^0_{\curlywedge, S_2} = F^0$, we have that $w \in F^0$.  Thus, $q_{w}$ is in the image of $\widetilde{\Phi}$ and hence $q_w$ is in the image of $\Psi$.  Let $e \in F^1_{\curlywedge, S_2}$.  Suppose $e \in F^1$.  Then $t_e$ is in the image of $\widetilde{\Phi}$ which implies that $t_e$ is in the image of $\Psi$.  Suppose $e = e(z)$ for some $z \in S_2$.  Since $\ftn{w}{S_1}{S_2}$ is a bijection, $z = w(v)$ for some $v \in S_1$.  Hence, $\Psi ( s_{e(v)} ) = t_{ e(w(v) ) } = t_{e(z)} = t_e$.  Thus, $\Psi$ is surjective.

To show that $\Psi$ is injective, we will first show that for every vertex-simple cycle $\mu$ in $E_{\curlywedge, S_1}$ with no exits, $\Psi ( s_\mu )$ is a partial unitary with spectrum equal to $\mathbb{T}$.  Let $\mu$ be a vertex-simple cycle in $E_{\curlywedge, S_1}$ with no exits.  Note from the construction of $E_{\curlywedge, S_1}$, $\mu$ is either a vertex-simple cycle in $E$ with no exits or $\mu = e(v)$ for some $v \in S_1$.  Suppose $\mu$ is a vertex-simple cycle in $E$ with no exits.  Then $\Psi ( s_{\mu} ) = \widetilde{\Phi} ( s_{\mu} )$, and since $\widetilde{ \Phi }$ is a \stariso, $\Psi ( s_\mu ) = \widetilde{\Phi} ( s_\mu )$ is a partial unitary with spectrum equal to $\mathbb{T}$.  Suppose $\mu = e(v)$ for some $v \in S_1$.  Then $\Psi ( s_\mu ) = t_{ e(w(v)) }$ which is a partial unitary with spectrum equal to $\mathbb{T}$.  Since $\Psi ( p_v ) \neq 0$ for all $v \in E^0_{\curlywedge, S_1}$, by \cite[Theorem~1.2]{MR1914564} $\Psi$ is injective.

We have just shown that $\Psi$ is a \stariso.  We are left with showing that $\Psi ( \mathcal{D} ( E_{\curlywedge, S_1} ) ) = \mathcal{D} ( F_{\curlywedge, S_2} )$.  Noting that $s_{\mu e(v)^n } s_{\mu e(v)^n}^* = s_\mu s_\mu^*$ for all $v \in S_1$ and $\mu\in E^*$ with $r(\mu)=v$, we have that $\mathcal{D} ( E_{\curlywedge, S_1} ) = \mathcal{D}(\mathfrak A)$.  Similarly, $\mathcal{D} ( F_{\curlywedge, S_2} ) = \mathcal{D} (\mathfrak B)$.  It is now clear that $\Psi ( \mathcal{D} ( E_{\curlywedge, S_1} ) ) = \mathcal{D} ( F_{\curlywedge, S_2} )$ since $\widetilde{\Phi} ( \mathcal{D} (\mathfrak A) ) = \mathcal{D}(\mathfrak B)$. 
\end{proof}

To use the above result to prove that an orbit equivalence preserving periodic points implies diagonal preserving isomorphism, we must show that we may reduce the problem to graphs satisfying the property that all vertex-simple cycles with no exits are loops.  These are the loops that we will unplug, and then plug again.

\begin{proposition}\label{prop:reduction-to-loops}
Let $E$ be a graph.  Then there exists a graph $F$ such that each vertex-simple cycle in $F$ with no exits is a loop and there exists a diagonal preserving \stariso $\ftn{\Psi}{C^*(F)}{C^*(E)}$. %such that $\Psi ( \mathcal{D}(F) ) = \mathcal{D}(E)$.
\end{proposition}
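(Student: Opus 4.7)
The plan is to construct $F$ from $E$ by performing a local surgery at each vertex-simple cycle with no exits that is not already a loop, and then define $\Psi$ on generators via the universal property.

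First I would set up the construction. Let $\mathcal{C}$ be the set of vertex-simple cycles in $E$ with no exits that are not loops, with a fixed rotation representative $\mu = e_1^{(\mu)} \cdots e_{n_\mu}^{(\mu)}$ for each, and set $v_i^{(\mu)} = s(e_i^{(\mu)})$. I first observe that distinct elements of $\mathcal{C}$ are vertex-disjoint: any vertex on a no-exit cycle emits only its cycle edge, so the cycle through that vertex is unique. Then I would define $F$ by $F^0 = E^0$ and
\[
F^1 = \bigl(E^1 \setminus \setof{e_{n_\mu}^{(\mu)}}{\mu \in \mathcal{C}}\bigr) \sqcup \setof{f_\mu}{\mu \in \mathcal{C}},
\]
where each $f_\mu$ is a new loop at $v_{n_\mu}^{(\mu)}$, with range and source maps extending those of $E$. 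To show that every vertex-simple no-exit cycle in $F$ is a loop, note that the new edges $f_\mu$ are such loops; any other no-exit vertex-simple cycle $C$ in $F$ cannot pass through a vertex carrying a new loop (that would be an exit for $C$), so $C$ uses only old edges and each of its vertices emits a single edge in $E$ as well. Hence $C$ is a no-exit vertex-simple cycle in $E$; if it were not a loop, it would lie in $\mathcal{C}$, but then its last edge would be absent in $F$, a contradiction.

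Next I would define $\Psi \colon C^*(F) \to C^*(E)$ via the universal property by $\Psi(p_v) = p_v$, $\Psi(s_e) = s_e$ for $e \in F^1 \cap E^1$, and
\[
\Psi(s_{f_\mu}) = s_{e_{n_\mu}^{(\mu)}} s_{e_1^{(\mu)}} \cdots s_{e_{n_\mu - 1}^{(\mu)}}.
\]
The image of $s_{f_\mu}$ is $s_{\widetilde{\mu}}$ for the rotated cycle $\widetilde{\mu}$ starting at $v_{n_\mu}^{(\mu)}$, and since $\widetilde{\mu}$ has no exits in $E$, this is a partial unitary on $p_{v_{n_\mu}^{(\mu)}}$ with spectrum $\mathbb{T}$. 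Using $s_{e_i^{(\mu)}} s_{e_i^{(\mu)}}^* = p_{v_i^{(\mu)}}$ (from (CK4) in $E$, as each $v_i^{(\mu)}$ emits only one edge) and a telescoping computation, the Cuntz-Krieger relations for the image family are straightforward, making $\Psi$ well-defined. Injectivity follows from the Cuntz-Krieger uniqueness theorem \cite[Theorem~1.2]{MR1914564}: every no-exit vertex-simple cycle in $F$ is a loop mapping to a partial unitary of full spectrum, and each $\Psi(p_v) = p_v$ is nonzero. For surjectivity, the same telescoping gives
\[
s_{e_{n_\mu}^{(\mu)}} = \Psi(s_{f_\mu}) \, s_{e_{n_\mu - 1}^{(\mu)}}^* \cdots s_{e_1^{(\mu)}}^*,
\]
so every generator of $C^*(E)$ lies in the image.

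For diagonal preservation, the inclusion $\Psi(\mathcal{D}(F)) \subseteq \mathcal{D}(E)$ is immediate since $\Psi$ sends each $s_\alpha s_\alpha^*$ for $\alpha \in F^*$ to $s_{\alpha'} s_{\alpha'}^*$, where $\alpha' \in E^*$ is obtained from $\alpha$ by replacing each $f_\mu$ with $e_{n_\mu}^{(\mu)} e_1^{(\mu)} \cdots e_{n_\mu - 1}^{(\mu)}$. For the converse, given $\nu \in E^*$ I would write $\nu = \alpha \beta$ with $\alpha \in F^*$ the longest prefix avoiding deleted edges. If $\beta$ is empty we are done; otherwise the first edge of $\beta$ is some $e_{n_\mu}^{(\mu)}$, and since $\mu$ has no exits the entire path $\beta$ stays on the vertices of $\mu$, so iterated application of $s_{e_i^{(\mu)}} s_{e_i^{(\mu)}}^* = p_{v_i^{(\mu)}}$ gives $s_\beta s_\beta^* = p_{s(\beta)}$, whence $s_\nu s_\nu^* = s_\alpha s_\alpha^* = \Psi(s_\alpha s_\alpha^*) \in \Psi(\mathcal{D}(F))$. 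The main obstacle I expect is the structural claim that the surgery creates no new non-loop no-exit cycles in $F$; this rests on the vertex-disjointness of the cycles in $\mathcal{C}$ and the case analysis above, after which the construction of $\Psi$ is routine Cuntz-Krieger bookkeeping.
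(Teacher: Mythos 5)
Your proposal is correct and takes essentially the same route as the paper: both constructions replace one edge of each vertex-simple no-exit non-loop cycle by a loop at a chosen vertex, send the new loop generator to the partial isometry of the corresponding full cycle, and then use Szyma\'{n}ski's general Cuntz--Krieger uniqueness theorem together with the telescoping identity $s_e s_e^* = p_{s(e)}$ along no-exit cycles to get injectivity, surjectivity, and preservation of the diagonal in both directions. The only differences are cosmetic: you delete the last edge of your rotation representative where the paper deletes the first, and you leave pre-existing no-exit loops untouched where the paper formally relabels them.
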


\begin{proof}
For each cycle $\mu$, let $V_\mu$ be the vertices that support the cycle $\mu$.  
%Set $W \subseteq E^0$ such that for each $v \in W$, $v = s( \mu )$ for some vertex-simple cycle with no exits, and for all $v, v' \in W$, if $v$ and $v'$ are vertices on a vertex-simple cycle with no exits $\mu$, then $v = v'$.  Thus, for each $v \in W$, there exists a unique vertex-simple cycle with no exits $\mu_v$ such that $s( \mu_v ) = v$.  
%
%Set $\mathcal{F} = \setof{ \mu_v }{ v \in W }$.  
If $\mu$ and $\nu$ are vertex-simple cycles in $E$ with no exits, then $V_\mu=V_\nu$ if and only if $V_\mu\cap V_\nu\neq\emptyset$.  Define a relation $\approx$ on the vertex-simple cycles in $E$ with no exits by $\mu\approx\nu$ if $V_\mu=V_\nu$.  Clearly, $\approx$ is an equivalence relation, and we may pick a set $\mathcal F$ of representatives of the equivalence classes.
Then $\mathcal{F} \subseteq E^*$ such that 
\begin{enumerate}[(a)]
\item For each $\mu \in \mathcal{F}$, $\mu$ is a vertex-simple cycle with no exits;
\item For each $\mu, \nu \in \mathcal{F}$, $V_\mu \cap V_\nu \neq \emptyset$ if and only if $\mu = \nu$; and 
\item For each vertex-simple cycle $\mu$ in $E$ with no exits, there exists $\nu \in \mathcal{F}$ such that $V_\mu = V_\nu$.
\end{enumerate}
%Such a set exists as $\mu\approx\nu$ defined by $V_\mu=V_\nu$ is an equivalence relation on the set of vertex-simple cycles in $E$ with no exits, with $V_\mu=V_\nu$ if and only if $V_\mu\cap V_\nu\neq\emptyset$.
%Then $\mathcal F$ is a set of representatives of the equivalence classes.

Set $S = \setof{ e \in E^1 }{ \text{$s(e) = s( \nu )$ for some $\nu \in \mathcal{F}$} }$.  Define $F$ by $F^0 = E^0$, $F^1 = \left( E^1 \setminus S \right) \sqcup \setof{ \overline{ \nu } }{ \nu \in \mathcal{F} }$, and $r_F \vert_{E^1 \setminus S } = r_E \vert_{E^1 \setminus S }$, $s_F \vert_{E^1 \setminus S } = s_E \vert_{E^1 \setminus S }$, $r_F( \overline{\nu} ) = s_F( \overline{\nu} ) = s_E( \nu )$ for all $\nu \in \mathcal{F}$.  It is clear from the construction of $F$, that each vertex-simple cycle in $F$ with no exits is a loop.  In fact, the vertex-simple cycles with no exits are $\setof{ \overline{ \nu } }{ \nu \in \mathcal{F} }$.

Let $\setof{ p_v, s_e }{ v \in E^0, e \in E^1 }$ be a generating Cuntz-Krieger $E$-family in $C^*(E)$ , and  
let  $\setof{ q_v, t_e }{ v \in F^0 , e \in F^1 }$ be a generating Cuntz-Krieger $F$-family in $C^*(F)$.  
We now define a Cuntz-Krieger $F$-family in $C^*(E)$.  For each $v \in F^0$, set $Q_v = p_v$ and for each $e \in E^1 \setminus S$, set $T_e = s_e$.  For $\nu \in \mathcal{F}$, set $T_{\overline{\nu}} = s_\nu$.  A computation shows that $\setof{ Q_v , T_e }{ v \in F^0 , e \in F^1 }$ is a Cuntz-Krieger $F$-family in $C^*(E)$.  Hence, there exists a \starhomo $\ftn{\Psi}{C^*(F)}{C^*(E)}$ such that $\Psi ( q_v ) = Q_v$ and $\Psi ( t_e ) = T_e$ for all $v \in F^0$, $e \in F^1$.   

Since the only vertex-simple cycles of $F$ with no exits are the $\overline{\nu}$'s and $\Psi ( t_{\overline{\nu}} ) = s_\nu$ where $\nu$ is a vertex-simple cycle with no exits, $\Psi ( t_{\overline{\nu} })$ is a partial unitary with spectrum equal to $\mathbb{T}$.  Since $\Psi ( q_v ) = p_v \neq 0$ for all $v \in F^0$, by \cite[Theorem~1.2]{MR1914564}, $\Psi$ is injective.  

We now show that $\Psi$ is surjective.  
Clearly, $ p_v, s_e\in\im\Psi$, for all $v \in E^0$ and $e \in E^1\setminus S$.
%the $s_e$'s where the edges are elements of 
%\[
%\bigcup_{ \substack{ \nu \in \mathcal{F}  \\ | \nu | \geq 2}} s_E(\nu) E^1.
%\]
For each $\nu \in \mathcal{F}$, $s_E^{-1} ( s_E( \nu ) ) = \{ e_\nu \}$ for some $e_\nu\in E^1$ since $\nu$ is a vertex-simple cycle with no exits.  So $S=\{e_\nu\mid\nu\in\mathcal F\}$.
Let $\nu\in\mathcal F$.  If $|\nu|=1$ then $\nu=e_\nu$ so $s_{e_\nu}=T_{\overline\nu}\in\im\Psi$.
Assume $|\nu|\geq 2$.
%To show that $\Psi$ is surjective, it is enough to show that $e_\nu \in \Psi ( C^* (F) )$ for all $\nu \in \mathcal{F}$ with $| \nu | \geq 2$.  Let $\nu \in \mathcal{F}$ with $| \nu | \geq 2$.  
Then $\nu = e_\nu \mu$ where $\mu = e_1 \cdots e_{|\nu| - 1}$ with each $e_i$ not an element of $S$.  Hence,  $s_{e_i} = T_{e_i} \in\im\Psi$ for each $i$.  Since $\nu$ is a vertex-simple cycle with no exits, we have that $s_{e_i} s_{e_i}^* = p_{s_E(e_i) }$.  
We now have
\begin{align*}
s_{e_\nu} &= s_{e_\nu} p_{s_E(e_1)} = s_{ e_{\nu} } s_{e_1} s_{e_1}^* \\ 
		&= s_{ e_{\nu} } s_{e_1} p_{s_E ( e_2) } s_{e_1}^* = s_{ e_{\nu} } s_{e_1} s_{e_2} s_{e_2}^*s_{e_1}^* \\
		&\ \vdots \\
		&= s_{ e_{\nu} } s_{\mu} s_{\mu}^* \\
		&= s_\nu s_\mu^* \\
		&= \Psi ( t_{\overline{\nu}} t_\mu) \in \psi ( C^*(F) ).
\end{align*}
Therefore, $\Psi$ is surjective, and hence $\Psi$ is a \stariso.  

It is clear that $\Psi ( \mathcal{D} (F) ) \subseteq \mathcal{D} (E)$.  Let $\mu \in E^*$.  Note that $s_\mu s_\mu^* = s_\nu s_\nu^*$ for a path $\nu\in E^*$ where all edges of $\nu$ are not in $S$.  
Indeed, if $\mu=\nu e_1\cdots e_n$ with $e_1\in S$, $e_2,\ldots, e_n\in E^1$, and $\nu\in E^*$ where all edges of $\nu$ are not in $S$, then $s(e_i)E^1=\{e_i\}$ for all $i$ since $e_1\in S$.  So $p_{r(\nu)}=s_{e_1\cdots e_n}s_{e_1\cdots e_n}^*$, hence $s_\mu s_\mu^*=%s_\nu s_{e_1\cdots e_n}s_{e_1\cdots e_n}^*s_\nu^*=
s_\nu s_\nu^*$.
Therefore, %$\nu$ is an element of $F^*$ such that 
$\Psi ( t_\nu t_\nu^* ) = s_\nu s_\nu^* = s_\mu s_\mu^* $.  Hence, $\Psi ( \mathcal{D} (F) ) \supseteq \mathcal{D}(E)$, so
$\Psi$ is diagonal preserving. %$\Psi ( \mathcal{D} (F) ) = \mathcal{D}(E)$.  
\end{proof}

We provide an example to illustrate the construction in the proof of Proposition~\ref{prop:reduction-to-loops}.
Let $E$ denote the left-most graph below.
Then $E$ contains four vertex-simple cycles with no exits. Let $\mathcal F=\{\nu\}$ with $\nu=e_3e_4e_1e_2$. Then $S=\{e_3\}$ and $F$ is the right-most graph below.
\[
 \xymatrix{ 
& & \bullet\ar[d] & \\
& & \bullet\ar[dr]^-{e_1} & \\
E \colon & \bullet\ar[ur]^-{e_4} & & \bullet\ar[dl]^-{e_2} \\
& & \bullet\ar[ul]^-{e_3} & 
 }
 \quad\quad\quad\quad
 \xymatrix{ 
&  & \bullet\ar[d] & \\
&  & \bullet\ar[dr]^-{e_1} & \\
F \colon & \bullet\ar[ur]^-{e_4} & & \bullet\ar[dl]^-{e_2} \\
& & \bullet \ar@(dl,ul)[]^-{\overline\nu} & .
 }
\]
%\[
% \xymatrix{ 
%& & \bullet\ar[d] & \\
%& & \bullet\ar[dr]^-{e_1} & \\
%E \colon & \bullet\ar[ur]^-{e_4} & & \bullet\ar[dl]^-{e_2} \\
%& & \bullet\ar[ul]^-{e_3} & .
% }
%\]
%\[
% \xymatrix{ 
%& & \bullet\ar[d] & \\
%& & \bullet\ar[dr]^-{e_1} & \\
%F \colon & \bullet\ar[ur]^-{e_4} & & \bullet\ar[dl]^-{e_2} \\
%& & \bullet \ar@(dl,ul)[]^-{\overline\nu} & .
% }
%\]

We are now ready to prove our main result.

\begin{theorem}\label{thm:main-result}
Let $E$ and $E'$ be graphs.  Then the following are equivalent.
\begin{enumerate}
\item \label{equivalence1}  There exists a diagonal preserving \stariso $\ftn{\Psi}{C^*(E)}{C^*(E')}$. %such that $\Psi ( \mathcal{D} (E) ) = \mathcal{D}(E')$.

\item \label{equivalence2} $\mathcal{G}_E$ and $\mathcal{G}_{E'}$ are isomorphic.

\item \label{equivalence3} There exists a homeomorphism $\ftn{\kappa}{\partial E}{\partial E'}$ such that $\kappa \circ \mathcal{P}_E \circ \kappa^{-1} = \mathcal{P}_{E'}$ and for each $x \in \partial E_{\iso}$, $x$ is eventually periodic if and only if $\kappa(x)$ is eventually periodic.

\item \label{equivalence4} There exists an orbit equivalence from $E$ to $E'$ preserving periodic points.
\end{enumerate}
\end{theorem}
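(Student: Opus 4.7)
The implications $(3)\Leftrightarrow(4)$ and $(1)\Rightarrow(4)$ are Proposition~\ref{prop:arXiv:1410.2308:Prop3.4} and Theorem~\ref{thm:diagonalorbit}, respectively. The isomorphism $\beta\colon\mathcal G_E\to\mathcal G_{E'}$ produced inside the proof of Proposition~\ref{prop:diagonalorbit} already gives $(1)\Rightarrow(2)$, and conversely any isomorphism $\mathcal G_E\to\mathcal G_{E'}$ induces a \stariso between the groupoid $C^*$-algebras that carries $C_0(\mathcal G_E^{(0)})$ onto $C_0(\mathcal G_{E'}^{(0)})$; transporting through $\Phi_E$ and $\Phi_{E'}$ then yields a diagonal preserving \stariso $C^*(E)\to C^*(E')$, so $(2)\Rightarrow(1)$. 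The content of the theorem is therefore the implication $(4)\Rightarrow(1)$, which is the only step that requires work.

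The plan is to reduce to the Condition~(L) setting by unplugging, apply Theorem~\ref{thm:arXiv:1410.2308}, and then plug the cycles back in. Given the orbit equivalence $\kappa\colon\partial E\to\partial E'$ preserving periodic points, I first invoke Proposition~\ref{prop:reduction-to-loops} to replace $E$ and $E'$ by graphs $F$ and $F'$ in which every vertex-simple cycle with no exits is a loop, together with diagonal preserving \starisos $C^*(F)\to C^*(E)$ and $C^*(F')\to C^*(E')$. Composing with $\kappa$ via Theorem~\ref{thm:diagonalorbit} and Proposition~\ref{prop:orbitequivalence-equivalencerelation} yields an orbit equivalence $\kappa_0\colon\partial F\to\partial F'$ that still preserves periodic points, and it suffices to build a diagonal preserving \stariso $C^*(F)\to C^*(F')$.

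Next I unplug the loops at $F^0_\cycle$ and $(F')^0_\cycle$ via Proposition~\ref{prop:oe-unplug}, producing Condition~(L) graphs $F_\curlyvee,F'_\curlyvee$ and orbit equivalences $\kappa_F\colon\partial F_\curlyvee\to\partial F$ and $\kappa_{F'}\colon\partial F'_\curlyvee\to\partial F'$. The composition $\mu=\kappa_{F'}^{-1}\circ\kappa_0\circ\kappa_F\colon\partial F_\curlyvee\to\partial F'_\curlyvee$ is an orbit equivalence, and the final clause of Proposition~\ref{prop:oe-unplug} combined with periodic-point preservation of $\kappa_0$ ensures that $\mu$ carries the clopen set $X_1$ of isolated points in $\partial F_\curlyvee$ with range in $F^0_\cycle$ bijectively onto its counterpart $X_2\subseteq\partial F'_\curlyvee$. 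In general $\mu$ need not send the sinks $F^0_\cycle$ themselves to sinks $(F')^0_\cycle$, so I will apply Lemma~\ref{lem:adjusting-sinks} twice — once on each side — to the sets of isolated paths of positive length in $X_1$ (respectively $X_2$) whose images under $\mu$ (respectively $\mu^{-1}$) are actual cycle-sinks. These path sets meet the closedness and injectivity hypotheses of the lemma since cycle-sinks are clopen by Lemma~\ref{l:sinks-closed} and $\mu|_{X_1}$ is a bijection. After composing with the two adjustment orbit equivalences, I may assume $\mu$ restricts to a bijection $w\colon F^0_\cycle\to(F')^0_\cycle$ between the cycle-sink vertices.

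I then invoke Theorem~\ref{thm:arXiv:1410.2308} for the Condition~(L) graphs $F_\curlyvee,F'_\curlyvee$ and the orbit equivalence $\mu$ to obtain a groupoid isomorphism $\phi\colon\mathcal G_{F_\curlyvee}\to\mathcal G_{F'_\curlyvee}$ restricting to $\mu$ on units. Transporting through $\Phi_{F_\curlyvee}$ and $\Phi_{F'_\curlyvee}$ gives a diagonal preserving \stariso $\Phi\colon C^*(F_\curlyvee)\to C^*(F'_\curlyvee)$; since each $v\in F^0_\cycle$ is isolated in $\partial F_\curlyvee$, the projection $p_v$ corresponds to the characteristic function of $\{v\}$ and therefore $\Phi(p_v)=q_{w(v)}$. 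Proposition~\ref{prop:iso-unplug-to-plug} with $S_1=F^0_\cycle$ and $S_2=(F')^0_\cycle$ then produces a diagonal preserving \stariso $C^*((F_\curlyvee)_{\curlywedge,F^0_\cycle})\to C^*((F'_\curlyvee)_{\curlywedge,(F')^0_\cycle})$, and the proof closes by noting that unplugging and then replugging a single loop at each cycle-sink of a graph with vertex-simple cycles all loops returns the original graph, so $(F_\curlyvee)_{\curlywedge,F^0_\cycle}\cong F$ and likewise on the primed side. The main obstacle is the sink-adjustment step: selecting the two path sets fed into Lemma~\ref{lem:adjusting-sinks} carefully enough that the composed orbit equivalence genuinely induces a bijection $F^0_\cycle\to(F')^0_\cycle$ rather than merely the bijection of the larger clopen sets $X_1\to X_2$ provided by $\mu$.
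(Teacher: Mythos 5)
Your overall architecture --- reduce to loops via Proposition~\ref{prop:reduction-to-loops}, unplug via Proposition~\ref{prop:oe-unplug}, repair the sinks with Lemma~\ref{lem:adjusting-sinks}, apply Theorem~\ref{thm:arXiv:1410.2308}, and replug with Proposition~\ref{prop:iso-unplug-to-plug} --- is exactly the paper's, and your handling of the other three implications is fine. The gap is in the sink-adjustment step, and it is twofold. First, hypothesis~\eqref{eq2:adjusting-sinks} of Lemma~\ref{lem:adjusting-sinks} for your sets does not follow from ``$\mu|_{X_1}$ is a bijection'': for $A_2=\{\mu(v) : v\in F^0_\cycle,\ |\mu(v)|\geq 1\}$ you must show that distinct elements have distinct ranges, i.e.\ that $v\mapsto r_{F'_\curlyvee}(\mu(v))$ is injective on $F^0_\cycle$. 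This is the technical heart of the paper's proof of $(4)\Rightarrow(1)$: it is obtained by applying \cite[Lemma~3.5]{arXiv:1410.2308} to the orbit equivalence of the \emph{plugged} graphs, showing that if $\beta(e_{v_1}^\infty)$ and $\beta(e_{v_2}^\infty)$ end in the same loop then $e_{v_1}^\infty$ and $e_{v_2}^\infty$ lie in the same orbit, forcing $v_1=v_2$. Nothing in your write-up supplies this, and it is not formal.

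Second, and more seriously, the two-sided adjustment you propose cancels itself. Suppose $v\in F^0_\cycle$ is such that $\mu(v)$ has positive length with range $w\in(F')^0_\cycle$ and $\mu^{-1}(w)$ has positive length with range $v$; this happens, for instance, for the graph with one edge $u\xrightarrow{\,f\,}v$ and a loop at $v$, with $F=F'$ and $\mu$ the orbit equivalence of $\partial F_\curlyvee=\{v,f\}$ interchanging $f$ and $v$. Then your first adjustment sends $v$ to $\mu^{-1}(w)\in A_1$, $\mu$ sends that to $w$, and your second adjustment sends $w$ back to $\mu(v)$, because $w=r_{F'_\curlyvee}(\mu(v))$ lies in $r(A_2)$; the net effect is $v\mapsto\mu(v)$, which is not a vertex, so the composed map does not carry $F^0_\cycle$ into $(F')^0_\cycle$. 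The paper avoids this by adjusting on \emph{one} side only, with the single set $\mu(V)$ for $V=\{v\in F^0_\cycle : |\mu(v)|\geq 1\}$, after first proving that $v\mapsto r_{F'_\curlyvee}(\mu(v))$ is a bijection from $F^0_\cycle$ onto $(F')^0_\cycle$ (injectivity and surjectivity both via \cite[Lemma~3.5]{arXiv:1410.2308}); the injectivity is precisely what guarantees that cycle vertices already landing on vertices are not disturbed by the adjustment. (A minor further inaccuracy: $X_1$ is open but need not be closed; your $A_1$ and $A_2$ are nevertheless clopen, being differences of $\mu$-preimages of the clopen sets of cycle vertices.)
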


\begin{proof}
The equivalence of \eqref{equivalence1} and \eqref{equivalence2} is \cite[Theorem~5.1 (1)$\iff$(2)]{arXiv:1410.2308}.  The equivalence of \eqref{equivalence3} and \eqref{equivalence4} follows from Proposition~\ref{prop:arXiv:1410.2308:Prop3.4}.  We are left to showing that (\ref{equivalence1}) and  (\ref{equivalence4}) are equivalent. 

By Theorem~\ref{thm:diagonalorbit}, we get that \eqref{equivalence1} implies \eqref{equivalence4}.  We now prove \eqref{equivalence4} implies \eqref{equivalence1}.

We will first show that we may assume that all vertex-simple cycles in $E$ and $E'$ with no exits are loops.  Indeed, by Proposition~\ref{prop:reduction-to-loops}, there are graphs $E_1$ and $E'_1$ such that all vertex-simple cycles in $E_1$ and $E'_1$ with no exits are loops, and diagonal preserving \starisos from $C^*(E)$ to $C^*( E_1)$ and from $C^*(E')$ to $C^*( E'_1)$.
By Theorem~\ref{thm:diagonalorbit}, there exists an orbit equivalence from $E$ to $E_1$ preserving periodic points and there exists an orbit equivalence from $E'$ to $E'_1$ preserving periodic points.  By Proposition~\ref{prop:orbitequivalence-equivalencerelation}, there exists an orbit equivalence from $E_1$ to $E'_1$ preserving periodic points
if and only if there is an orbit equivalence from $E$ to $E'$ preserving periodic points.
And clearly, there is a diagonal preserving \stariso from $C^*(E_1)$ to $C^*(E_1')$ if and only if there is one from $C^*(E)$ to $C^*(E')$.  This establishes the claim.
 %Thus, if there exists a \stariso $\ftn{\Phi}{ C^* ( E_1 ) }{ C^*(E'_1)}$ such that $\Phi(\mathcal{D}( E_1 ) )= \mathcal{D} ( E'_1)$, then $\ftn{ \Psi = \Phi_2^{-1} \circ \Phi \circ \Phi_1 }{ C^*(E) } { C^*(E') }$ is a \stariso such that $\Psi ( \mathcal{D} ( E ) ) = \mathcal{D} ( E')$.  Hence, we may assume that every vertex-simple cycle in $E$ and $E'$ with no exits are loops.

Assume all vertex-simple cycles in $E$ and $E'$ with no exits are loops, and that there exists an orbit equivalence $\beta$ from $E$ to $E'$ preserving periodic points.  
Let $\ftn{\kappa_E }{\partial E_\curlyvee}{\partial E}$ and $\ftn{\kappa_{E'}}{\partial E'_\curlyvee}{\partial E'}$ be the orbit equivalences provided in Proposition~\ref{prop:oe-unplug}.  Then by Propostion~\ref{prop:orbitequivalence-equivalencerelation}, $\ftn{\lambda = \kappa_{E'}^{-1} \circ \beta \circ \kappa_E }{\partial E_\curlyvee }{ \partial E'_\curlyvee}$ is an orbit equivalence.  Let $V =\setof{ v \in E^0_{\cycle }}{ \lambda ( v ) \in (\partial E'_\curlyvee)^{\geq 1} }$.  Set $F = \lambda (V)$.  By Lemma~\ref{l:sinks-closed}, $V$ is closed in $\partial E_\curlyvee$.  Since $\lambda$ is a homeomorphism, we have that $F$ is closed in $\partial E'_\curlyvee$.  Hence, $F$ satisfies \eqref{eq3:adjusting-sinks} in Lemma~\ref{lem:adjusting-sinks}. 

Let $v \in E^0_\cycle$.  Then, $\kappa_E ( v ) = e_v^\infty$.  Since $\beta$ is an orbit equivalence preserving periodic points, there exist $w \in (E')^0_\cycle$ and $\mu \in (E')^*$ with no edges equal to $e_w$ and $r_{E'}( \mu ) = w$ such that $\beta ( e_v^\infty ) = \mu e_w^\infty$.  So, in particular, $\mu \in (E'_\curlyvee)^*$ with $r_{E'}( \mu ) \in (E')^0_\cycle$.  Therefore, $\lambda( v ) = \kappa_{E'}^{-1} ( \beta ( \kappa_E ( v ) ) ) = \kappa_{E'}^{-1}( \mu e_w^\infty )= \mu$.  Hence, $r_{E'_\curlyvee} ( \lambda ( E^0_\cycle ) ) \subseteq (E')^0_\cycle$.
In particular, $F\subseteq (E'_\curlyvee)^{\geq 1}$ with $r_{E'_\curlyvee} (F ) \subseteq (E_\curlyvee')^0_\sink$, so $F$ satisfies \eqref{eq1:adjusting-sinks} in Lemma~\ref{lem:adjusting-sinks}. 
A similar argument using $\lambda^{-1}$ shows that $r_{E_\curlyvee} ( \lambda^{-1} ( (E')^0_\cycle ) ) \subseteq E^0_\cycle$.  
%Hence, $r_{E'_\curlyvee} ( \lambda ( E^0_\cycle ) )= (E')^0_\cycle$.  So in particular, since $(E')^0_\cycle \subseteq (E_\curlyvee')^0_\sink$, $r_{E'} ( \lambda ( V ) ) \in (E_\curlyvee')^0_\sink$.  So, $r_{E'_\curlyvee} ( F ) \subseteq (E'_\curlyvee)^0_\sink$ which implies that $F$ satisfies \eqref{eq1:adjusting-sinks} in Lemma~\ref{lem:adjusting-sinks}.      

Let $v\in (E')^0_\cycle$.  Then $\lambda^{-1}(v)=\mu\in E_\curlyvee^*$ with $r_E(\mu)=w\in E^0_\cycle$ and $\beta(\mu e_w^\infty)=e_v^\infty$.
By~\cite[Lemma~3.5]{arXiv:1410.2308}, there exist $n,m\in\N_0$ such that 
\[ \sigma_{E'}^n(\beta(\sigma_E^{|\mu|}(\mu e_w^\infty)))=\sigma_{E'}^n(\beta(\mu e_w^\infty)) .\]
As $\sigma_{E'}^n(\beta(\mu e_w^\infty))=e_v^\infty$,  $\beta(e_w^\infty)=\nu e_v^\infty$ for some $\nu\in (E')^*$ with $r_{E'}(\nu)=v$ and no edges in $\nu$ equal to $e_v$.
Hence $\lambda(w)=\nu$ with $r_{E'_\curlyvee}(\nu)=v$, so
\[ r_{E'_\curlyvee}(\lambda(r_{E_\curlyvee}(\lambda^{-1}(v))))
= r_{E'_\curlyvee}(\lambda(r_{E_\curlyvee}(\mu)))
= r_{E'_\curlyvee}(\lambda(w))
=v . \]
Applying this to $r_{E_\curlyvee} ( \lambda^{-1} ( (E')^0_\cycle ) ) \subseteq E^0_\cycle$ we see that $ (E')^0_\cycle \subseteq r_{E'_\curlyvee} ( \lambda ( E^0_\cycle ) )$ and conclude that $ (E')^0_\cycle = r_{E'_\curlyvee} ( \lambda ( E^0_\cycle ) )$.

We now show that $F$ satisfies \eqref{eq2:adjusting-sinks} in Lemma~\ref{lem:adjusting-sinks}.
Let $v_1,v_2\in E^0_\cycle$ and assume that $r_{E'_\curlyvee}( \lambda(v_1) ) = r_{E'_\curlyvee} ( \lambda(v_2) )$.  We will show that $\lambda (v_1) = \lambda (v_2)$.  %Choose $e_1, e_2 \in E^1_\cycle$ such that $s_E ( e_i ) = v_i$.  
Then $\beta ( e_{v_1}^\infty ) = \mu_1 e_{w_1}^\infty$ and $\beta ( e_{v_2}^\infty ) = \mu_2 e_{w_2}^\infty$ where $w_i \in (E')^0_\cycle$, $\mu_i \in (E')^*$, no edges in $\mu_i$ are equal to $e_{w_i}$ and $r_{E'}( \mu_i ) = w_i$.  So, $\lambda( v_i ) = \mu_i$, which implies that $r_{E'_\curlyvee} ( \lambda (v_i) ) = w_i$.  
%Since $r_{E'_\curlyvee}( \lambda(v_1) ) = r_{E'_\curlyvee} ( \lambda(v_2) )$, we have that $s_{E'}( f_1 ) = s_{E'}( f_2)$.  
So $w_1=w_2$.
%So, $f_1 = f_2$.  
Since $\beta$ is an orbit equivalence, by \cite[Lemma~3.5]{arXiv:1410.2308}, there exist $n_1,m_1, n_2, m_2 \in \N_0$ such that
\[
\sigma_E^{n_i} ( \beta^{-1} ( \sigma_F^{|\mu_i|} ( \mu_i e_{w_1}^\infty ))) = \sigma_E^{m_i} ( \beta^{-1} ( \mu_i e_{w_1}^\infty ))
\] 
for all $i\in\{1,2\}$.
%\[
%\left( \sigma_E^n \circ \beta^{-1} \circ \sigma_F^{|\mu_1|} \right) ( \mu_1 f^\infty ) = \left(\sigma_E^m \circ \beta^{-1} \right)( \mu_1 f^\infty )
%\]
%and 
%\[
%\left( \sigma_E^k \circ \beta^{-1} \circ \sigma_F^{|\mu_2|} \right) ( \mu_2 f^\infty ) = \left(\sigma_E^l \circ \beta^{-1} \right)( \mu_2 f^\infty ).
%\]
Thus,
\[ e_{v_1}^\infty = \sigma_E^{n_2}(e_{v_1}^\infty) = \sigma_E^{n_1+n_2}(\beta^{-1}(e_{w_1}^\infty)) =  \sigma_E^{n_1}(e_{v_2}^\infty) = e_{v_2}^\infty .\]
This implies that $v_1=v_2$.
%Thus,
%\begin{align*}
%e_1^\infty &= \sigma_E^{m+k} ( e_1^\infty ) = \sigma_E^k \left( \left(\sigma_E^m \circ \beta^{-1} \right)( \mu_1 f^\infty ) \right) = \sigma_E^k \left( \left( \sigma_E^n \circ \beta^{-1} \circ \sigma_F^{|\mu_1|} \right) ( \mu_1 f^\infty ) \right)  \\
%		&= \sigma_E^k \left( \left( \sigma_E^n \circ \beta^{-1} \right) ( f^\infty ) \right) = \sigma_E^n \left( \left( \sigma_E^k \circ \beta^{-1} \circ \sigma_F^{|\mu_2|} \right) ( \mu_2 f^\infty ) \right) \\
%		 &= \sigma_E^n \left( \left(\sigma_E^l \circ \beta^{-1} \right)( \mu_2 f^\infty ) \right) = \sigma_E^{n+l} ( e_2^\infty ) = e_2^\infty,
%\end{align*}
%which implies that $e_1 = e_2$.  
%Hence, 
%\begin{align*}
%\lambda ( v_1 ) &= \left( \kappa_{E'}^{-1} \circ \beta \circ \kappa_E\right)( v_1 ) = \left( \kappa_{E'}^{-1} \circ \beta \right) ( e_1^\infty ) =\left( \kappa_{E'}^{-1} \circ \beta \right) ( e_2^\infty )  \\
%			&= \left( \kappa_{E'}^{-1} \circ \beta \circ \kappa_E\right)( v_2 ) = \lambda( v_2).
%\end{align*}
We have just shown that $F$ satisfies \eqref{eq2:adjusting-sinks} in Lemma~\ref{lem:adjusting-sinks}.

Let $\ftn{\kappa}{\partial E'_\curlyvee}{ \partial E'_\curlyvee}$ be the orbit equivalence given in Lemma~\ref{lem:adjusting-sinks} for the graph $E'_\curlyvee$ and the set $F$.  Then $\gamma = \kappa \circ \lambda$ is an orbit equivalence from $\partial E_{\curlyvee}$ to $\partial E'_{\curlyvee}$.  
Let $v\in E^0_\cycle$.  If $\lambda(v)\in F$ then $\kappa(\lambda(v))=r_{E'_\curlyvee}(\lambda(v))$.
If $\lambda(v)\notin F$ then $\lambda(v)=r_{E'_\curlyvee}(\lambda(v))\notin r_{E'_\curlyvee}(F)$ as we saw above that $r_{E'_\curlyvee}(\lambda(v))=r_{E'_\curlyvee}(\lambda(w))$ implies $v=w$ for all $w\in E^0_\cycle$.
Either way, $\gamma(v) = \kappa(\lambda(v))=r_{E'_\curlyvee}(\lambda(v))$.
Since $r_{E'_\curlyvee} ( \lambda ( E^0_\cycle ) )= (E')^0_\cycle$,
% and $\kappa$ sends $\lambda (v)$ for $v \in E^0_\cycle$ to $r_{E'_{\curlyvee}}( \lambda(v) )$, 
 we have that $\gamma ( E^0_{\cycle} ) = (E')^0_{\cycle}\subseteq (E'_\curlyvee)^0_\sink$.  Since $E_\curlyvee$ and $E'_\curlyvee$ are graphs satisfying Condition~(L), by Theorem~\ref{thm:arXiv:1410.2308}, there exists an isomorphism $\ftn{\phi}{\mathcal{G}_{E_\curlyvee}}{ \mathcal{G}_{E'_\curlyvee}}$ such that $\phi \vert_{\partial E_\curlyvee} = \gamma$.  
 By~\cite[Proposition~2.2]{arXiv:1410.2308},
this isomorphism of groupoids induces a \stariso $\ftn{\Phi}{C^*( E_\curlyvee )}{C^*( E'_\curlyvee )}$ such that $\Phi ( \mathcal{D} ( E_\curlyvee) ) = \mathcal{D} ( E'_\curlyvee)$ and $\Phi ( p_v ) = q_{\gamma(v)}$ for all $v \in E^0_\cycle$, where $\setof{ p_v, s_e }{ v\in E^0_\curlyvee, e \in E^1_\curlyvee}$ is a Cuntz-Krieger $E_\curlyvee$-family generating $C^*( E_\curlyvee)$ and $\setof{ q_v, t_e }{ v \in (E'_\curlyvee)^0, e \in (E'_\curlyvee)^1}$ is a Cuntz-Krieger $E'_\curlyvee$-family generating $C^*( E'_\curlyvee)$.  Since $(E_\curlyvee)_{\curlywedge, E^0_\cycle } \cong E$ and $(E'_\curlyvee)_{\curlywedge, (E')^0_\cycle } \cong E'$, Proposition~\ref{prop:iso-unplug-to-plug} implies that there exists a diagonal preserving \stariso $\ftn{\Psi}{ C^*(E) }{ C^*(E')}$. %such that $\Psi ( \mathcal{D} ( E ) ) = \mathcal{D} ( E' )$.  
Hence \eqref{equivalence4} implies \eqref{equivalence1}.  
\end{proof}

We will denote the $C^*$-algebra of compact operators on $\ell^2(\N)$ by $\mathbb{K}$ and the maximal abelian subalgebra of $\mathbb{K}$ consisting of diagonal operators by $\mathcal{C}$.  For a commutative ring $R$ with identity, we write $\mathsf{M}_\infty(R)$ for the ring of finitely supported, countably infinite square matrices over $R$ and $\mathsf D_\infty (R)$ for the abelian subring of $\mathsf{M}_\infty (R)$ consisting of diagonal matrices.  

We write $( C^*(E), \mathcal{D} (E) ) \cong ( C^*(F) , \mathcal{D}(F) )$ if there exists a diagonal preserving \stariso $\Psi$ from $C^*(E)$ to $C^*(F)$, %such that $\Psi ( \mathcal{D} (E) ) = \mathcal{D}(F)$
 and write $( C^*(E) \otimes \mathbb{K} , \mathcal{D} (E) \otimes \mathcal{C} ) \cong ( C^*(F) \otimes \mathbb{K}, \mathcal{D}(F) \otimes \mathcal{C} )$ if there exists a \stariso from $\Psi$ from $C^*(E) \otimes \mathbb{K}$ to $C^*(F) \otimes \mathbb{K}$ such that $\Psi ( \mathcal{D} (E) \otimes \mathcal{C}  ) = \mathcal{D}(F) \otimes \mathcal{C}$.  

Let $R$ be a commutative ring with identity.  If there is a ring \stariso $\Psi$ from $L_R(E)$ to $L_R(F)$ such that $\Psi ( \mathcal{D}_R(E) ) = \mathcal{D}_R(F)$, then we write 
\[
( L_R(E), \mathcal{D}_R(E) ) \cong ( L_R(F), \mathcal{D}_R(F) ).
\]  
Similarly, if there is a ring \stariso $\Psi$ from $L_R(E) \otimes \mathsf{M}_\infty(R)$ to $L_R(F) \otimes \mathsf{M}_\infty(R)$ such that $\Psi ( \mathcal{D}_R(E) \otimes \mathsf D_\infty (R) ) = \mathcal{D}_R(F) \otimes \mathsf D_\infty (R)$, then we write 
\[
( L_R(E) \otimes \mathsf{M}_\infty(R), \mathcal{D}_R(E) \otimes \mathsf D_\infty (R) ) \cong (L_R(F) \otimes \mathsf{M}_\infty(R), \mathcal{D}_R(F) \otimes \mathsf D_\infty (R) ).
\]

Let $\mathcal{R}$ be the full equivalence relation on $\N \times \N$.  We can regard $\mathcal{R}$ as a discrete principal groupoid with unit space $\N$.
\begin{corollary}\label{bigcor}
Let $E$ and $F$ be graphs, and let $R$ be a commutative integral domain with~1.
The following are equivalent:
\begin{enumerate}
\item\label{it:CE sdcong CF} $( C^*(E)
    \otimes \mathbb{K}, \mathcal{D}(E) \otimes \mathcal{C} ) \cong ( C^*(F) \otimes \mathbb{K}, \mathcal{D}(F) \otimes \mathcal{C} )$;
\item\label{it:LE sdcong LF} $( L_R(E) \otimes \mathsf M_\infty(R)  , \mathcal{D}_R(E) \otimes \mathsf D_\infty(R) ) \cong ( L_R(F) \otimes \mathsf M_\infty(R), \mathcal{D}_R(F) \otimes \mathsf D_\infty(R) )$;
\item\label{it:CSE dcong CSF} $( C^*(SE), \mathcal{D}(SE) ) \cong ( C^*(SF) , \mathcal{D}(SF) )$; 
\item\label{it:LSE dcong LSF} $( L_R(SE), \mathcal{D}_R(SE) ) \cong (L_R(SF) , \mathcal{D}_R(SF) )$;
\item\label{it:E sge F} $\mathcal{G}_E \times \mathcal{R} \cong \mathcal{G}_F \times \mathcal{R}$;
\item\label{it:SE ge SF} $\mathcal{G}_{SE} \cong \mathcal{G}_{SF}$;
\item\label{it:SE oe SF} There exists an orbit equivalence from $SE$ to $SF$ preserving periodic points.
\end{enumerate}
\end{corollary}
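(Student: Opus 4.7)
The plan is to reduce each statement in the corollary to an instance of Theorem~\ref{thm:main-result} applied to the stabilized graphs $SE$ and $SF$. Four kinds of object appear here --- graph $C^*$-algebras, Leavitt path algebras, graph groupoids, and dynamics on boundary path spaces --- and the key structural observation is that all four behave compatibly with the graph stabilization operation $E\mapsto SE$.

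The first step is to record (or cite) three canonical diagonal-preserving identifications, namely
\[
(C^*(SE), \mathcal{D}(SE)) \cong (C^*(E) \otimes \mathbb{K}, \mathcal{D}(E) \otimes \mathcal{C}),
\]
\[
(L_R(SE), \mathcal{D}_R(SE)) \cong (L_R(E) \otimes \mathsf{M}_\infty(R), \mathcal{D}_R(E) \otimes \mathsf{D}_\infty(R)),
\]
together with an isomorphism of \'etale groupoids $\mathcal{G}_{SE} \cong \mathcal{G}_E \times \mathcal{R}$. These are standard facts about graph stabilizations and they deliver the pairs of equivalences \eqref{it:CE sdcong CF}~$\iff$~\eqref{it:CSE dcong CSF}, \eqref{it:LE sdcong LF}~$\iff$~\eqref{it:LSE dcong LSF}, and \eqref{it:E sge F}~$\iff$~\eqref{it:SE ge SF}. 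In effect, the corollary then reduces to a statement about the graphs $SE$ and $SF$ alone.

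The second step is to feed $SE$ and $SF$ into Theorem~\ref{thm:main-result}, which immediately yields the chain \eqref{it:CSE dcong CSF}~$\iff$~\eqref{it:SE ge SF}~$\iff$~\eqref{it:SE oe SF}. To close the remaining loop through the Leavitt path algebra statement \eqref{it:LSE dcong LSF}, I would invoke the algebraic analogue of Renault's reconstruction theorem in its Steinberg-algebra form, which identifies $(L_R(E), \mathcal{D}_R(E))$ with the Steinberg algebra of $\mathcal{G}_E$ together with its natural diagonal; the corresponding reconstruction statement for graph groupoids then gives \eqref{it:LSE dcong LSF}~$\iff$~\eqref{it:SE ge SF} and closes the cycle.

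The main obstacle I anticipate is bookkeeping rather than conceptual: one must verify that each stabilization identification is genuinely diagonal-preserving (rather than merely an isomorphism of the ambient algebras), and that the Leavitt path algebra reconstruction is available in the required generality, covering graphs without Condition~(L) and allowing an arbitrary commutative integral domain of coefficients. Beyond these details, no new argument past Theorem~\ref{thm:main-result} should be required.
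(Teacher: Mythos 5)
Your overall architecture agrees with the paper's: the paper's entire proof consists of citing \cite[Theorem~4.2]{arXiv:1602.02602} for the equivalence of \eqref{it:CE sdcong CF} through \eqref{it:SE ge SF}, and then applying Theorem~\ref{thm:main-result} to the graphs $SE$ and $SF$ to get \eqref{it:SE ge SF}$\iff$\eqref{it:SE oe SF} --- exactly your second step. Your stabilization identifications in the first step are correct and are indeed among the ingredients behind that citation.

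The gap is in your treatment of \eqref{it:LSE dcong LSF}$\iff$\eqref{it:SE ge SF}. The Steinberg-algebra form of Renault's reconstruction theorem recovers $\mathcal{G}$ from the pair consisting of $A_R(\mathcal{G})$ and its diagonal only when $\mathcal{G}$ is effective (topologically principal), which for graph groupoids amounts to Condition~(L). The whole point of the present paper --- and the reason the extended Weyl groupoid of Section~4 is needed on the $C^*$-side --- is that the naive reconstruction fails when Condition~(L) does: the isotropy sitting over the eventually periodic isolated points is invisible to the plain Weyl groupoid. You flag this yourself as a bookkeeping detail, but it is not one; the implication \eqref{it:LSE dcong LSF}$\implies$\eqref{it:SE ge SF} for arbitrary graphs and arbitrary commutative integral domains is genuinely the content of \cite[Theorem~4.2]{arXiv:1602.02602}, which constructs an algebraic analogue of the extended Weyl groupoid (with an algebraic substitute for the $K_1$-class of the unitary $U_{(x,n_1,n_2)}$ used to detect isotropy). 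Replacing your appeal to the generic Steinberg reconstruction by a citation of that result closes the gap; the rest of your outline is sound.
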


\begin{proof}
By \cite[Theorem~4.2]{arXiv:1602.02602}, (\ref{it:CE sdcong CF}) through (\ref{it:SE ge SF}) are equivalent.  (\ref{it:SE ge SF}) $\iff$ (\ref{it:SE oe SF}) follows from Theorem~\ref{thm:main-result} for the graphs $SE$ and $SF$.
\end{proof}

We end by noting that the results above combine with \cite{preprint:ERRS} to completely resolve the relationship between orbit and flow equivalence for countable shift spaces. 

\begin{corollary}
Let $E$ and $F$ be finite graphs with no sinks and sources such that 
the edge shift spaces $\mathsf{X}_E$ and $\mathsf{X}_F$ are countable sets.
\begin{enumerate}
\item\label{it:direct} 
 If $E$ and $F$ are orbit equivalent, then  $\mathsf{X}_E$ and $\mathsf{X}_F$ are flow equivalent. 
\item\label{it:stable} $SE$ and $SF$ are orbit equivalent if and only if $\mathsf X_E$ and $\mathsf X_F$ are flow equivalent.
\end{enumerate}
\end{corollary}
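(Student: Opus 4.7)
The plan is to reduce both statements to the combination of Corollary~\ref{bigcor} with the main result of \cite{preprint:ERRS}, which for finite graphs with no sinks or sources and countable edge shifts characterizes flow equivalence of the shift spaces in terms of the existence of a stable diagonal preserving \stariso of the associated graph $C^*$-algebras.

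For \eqref{it:direct}, I would first observe that because $E$ and $F$ are finite and have no sinks, the remark following Definition~\ref{d:orbitequivalence} guarantees that every orbit equivalence between $E$ and $F$ preserves periodic points automatically. Theorem~\ref{thm:main-result} then yields a diagonal preserving \stariso $(C^*(E),\mathcal D(E))\cong(C^*(F),\mathcal D(F))$. Tensoring with $(\BK,\mathcal C)$ produces the stable diagonal preserving \stariso needed to apply \cite{preprint:ERRS}, which converts the statement into flow equivalence of $\mathsf X_E$ and $\mathsf X_F$.

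For \eqref{it:stable} the backward implication is straightforward: flow equivalence of $\mathsf X_E$ and $\mathsf X_F$ gives, via \cite{preprint:ERRS}, a stable diagonal preserving \stariso, and Corollary~\ref{bigcor} then produces an orbit equivalence between $SE$ and $SF$ which in particular preserves periodic points. For the forward direction, it suffices to promote a given orbit equivalence between $SE$ and $SF$ to one preserving periodic points, since then Corollary~\ref{bigcor} combined with \cite{preprint:ERRS} closes the chain.

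The main obstacle is this promotion step. I would handle it by showing that, under the hypotheses, every isolated point of $\partial(SE)$ is already eventually periodic, and likewise for $\partial(SF)$, which makes condition \eqref{def:oe3} of Definition~\ref{d:orbitequivalence} vacuous. Countability of $\mathsf X_E$ together with the finiteness of $E^0$ forces every infinite path in $E$ to be eventually periodic, since a countable compact dynamical system on a finite alphabet cannot support an aperiodic orbit. The stabilization $SE$ attaches to $E$ only non-wandering heads that feed into vertices of $E$ in finitely many steps and introduces no sinks, so every element of $\partial(SE)$ either already lies in $\partial E$ or reaches $\partial E$ after finitely many steps along such a head, and in either case is eventually periodic. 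This gives the required promotion and completes the proof of \eqref{it:stable}.
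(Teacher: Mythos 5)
Your route is the paper's: both parts are funneled through Theorem~\ref{thm:main-result} and Corollary~\ref{bigcor} together with the results of \cite{preprint:ERRS} (the paper reaches the backward direction of the second part via Move equivalence and \cite{arXiv:1602.02602}, which is what your phrase ``via \cite{preprint:ERRS}'' compresses). The one place where you elaborate a step the paper leaves implicit is the promotion of an orbit equivalence between $SE$ and $SF$ to one preserving periodic points, and there your stated justification is false as a general principle: a countable compact subshift over a finite alphabet \emph{can} contain an aperiodic point (the orbit closure of $x=1\,0\,1\,0^2\,1\,0^3\,1\cdots$ is countable, yet $x$ is not eventually periodic). What saves your conclusion is that $\mathsf{X}_E$ is the edge shift of a \emph{finite} graph: countability forces every vertex to support at most one return path (two distinct return paths at a vertex embed a full $2$-shift), and since any infinite path revisits some vertex infinitely often, the segments between consecutive visits all equal the unique return path there, so the path is eventually periodic. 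This is precisely the observation the paper records and uses to invoke \cite[Theorem~7.1]{preprint:ERRS}, so the repair is immediate. Note also that the promotion step does not actually need countability: the isolated points of $\partial (SE)$ are the eventually non-wandering points, whose tails run through vertices of the finite sink-free graph $E$ emitting a single edge and are therefore eventually periodic, so condition~(3) of Definition~\ref{d:orbitequivalence} is vacuous for any such $E$. With that one justification replaced, your argument matches the paper's proof.
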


\begin{proof}
Suppose $E$ and $F$ are orbit equivalent.  Since $E$ and $F$ have no sinks, there exists an orbit equivalence from $E$ to $F$ preserving periodic points.  Hence, by Theorem~\ref{thm:main-result}, $C^*(E) \cong C^*(F)$. One easily sees by contradiction that the asserted countability translates to the condition that every vertex of $E$ and $F$ either supports exactly one return path or does not support a return path.    Hence, \cite[Theorem~7.1 (3)$\implies$(4)]{preprint:ERRS} applies, and thus the shift spaces $\mathsf{X}_E$ and $\mathsf{X}_F$ are flow equivalent, proving \eqref{it:direct}.

For \eqref{it:stable}, we note that if  $SE$ and $SF$ are orbit equivalent then by Corollary \ref{bigcor}, $C^*(E) \otimes \mathbb{K} \cong C^*(F) \otimes \mathbb{K}$, and the forward implication follows as above. In the other direction, suppose $\mathsf X_E$ and $\mathsf X_F$ are flow equivalent.  By \cite[Lemma 5.1]{preprint:ERRS}, then $E$ and $F$ are Move equivalent.  By \cite[Corollary 4.8]{arXiv:1602.02602}, there exists a diagonal preserving isomorphism from $C^*(E) \otimes \mathbb K$ to $C^*(F) \otimes \mathbb K$.  Hence, by Corollary \ref{bigcor}, $SE$ and $SF$ are orbit equivalent.  
\end{proof}

%%%%%%%%%%%%%%%%%%%%%%%%%%%%%%%%%%%%%%%%%%%%%%%%%%%%%%%%%%
%%% SECTION: Acknowledgement %%%%%%%%%%%%%%%%%%%%%%%%%%%%%%%%
%%%%%%%%%%%%%%%%%%%%%%%%%%%%%%%%%%%%%%%%%%%%%%%%%%%%%%%%%%

\section{Acknowledgements}

This work was partially supported by the Danish National Research Foundation through the Centre for Symmetry and Deformation (DNRF92), by VILLUM FONDEN through the network for Experimental Mathematics in Number Theory, Operator Algebras, and Topology, and by a grant from the Simons Foundation (\# 279369 to Efren Ruiz).

This work was completed while all three authors were
attending the research program \emph{Classification of operator algebras: complexity,
rigidity, and dynamics} at the Mittag-Leffler Institute, January--April 2016. We thank the institute and its staff for the excellent work conditions provided.

The authors thank Aidan Sims for many helpful discussions.  
%%%%%%%%%%%%%%%%%%%%%%%%%%%%%%%%%%%%%%%%%%%%%%%%%%%%%%%%%%
%%% SECTION: Bibliography %%%%%%%%%%%%%%%%%%%%%%%%%%%%%%%%
%%%%%%%%%%%%%%%%%%%%%%%%%%%%%%%%%%%%%%%%%%%%%%%%%%%%%%%%%%

\providecommand{\bysame}{\leavevmode\hbox to3em{\hrulefill}\thinspace}
\providecommand{\MR}{\relax\ifhmode\unskip\space\fi MR }
% \MRhref is called by the amsart/book/proc definition of \MR.
\providecommand{\MRhref}[2]{%
  \href{http://www.ams.org/mathscinet-getitem?mr=#1}{#2}
}
\providecommand{\href}[2]{#2}

\end{document}